\DeclareMathOperator{\erfc}{erfc}
\DeclareMathOperator{\erf}{erf}
\DeclareMathOperator{\sat}{sat}
\DeclareMathOperator{\argmax}{arg\,max}
\definecolor{mred}{rgb}{0.6, 0, 0}
\definecolor{mgreen}{rgb}{0, 0.5, 0}
\definecolor{mblue}{rgb}{0, 0, 0.5}
\definecolor{mcyan}{rgb}{0, 0.5, 0.5}
\newcommand{\R}{\ensuremath{\mathbb{R}}}
\newcommand{\N}{\ensuremath{\mathbb{N}}}
\newcommand{\Nz}{\ensuremath{\mathbb{N}_0}}
\newcommand{\posR}{\ensuremath{\R_{\ge 0}}}
\newcommand{\ra}{\ensuremath{\rightarrow}}
\newcommand{\lra}{\ensuremath{\longrightarrow}}
\newcommand{\fa}{\ensuremath{\forall\,}}
\renewcommand{\le}{\ensuremath{\leqslant}}
\renewcommand{\ge}{\ensuremath{\geqslant}}
\renewcommand{\mapsto}{\ensuremath{\longmapsto}}
\newcommand{\setmin}{\ensuremath{\smallsetminus}}
\newcommand{\corresp}{\ensuremath{\lra\!\!\!\!\!\!\!\!\!\!\to{\;}}}
\newcommand{\EE}{\ensuremath{\mathsf{E}}}
\newcommand{\PP}{\ensuremath{\mathsf{P}}}
\newcommand{\epower}[1]{\ensuremath{\mathrm{e}^{#1}}}
\newcommand{\norm}[1]{\ensuremath{\left\lVert #1 \right\rVert}}
\newcommand{\Borelsigalg}[1]{\ensuremath{\mathfrak{B}\!\left(#1\right)}}
\newcommand{\indic}[1]{\ensuremath{\boldsymbol{1}_{#1}}}
\newcommand{\abs}[1]{\ensuremath{\left\lvert{#1}\right\rvert}}
\newcommand{\mrm}[1]{\ensuremath{\mathrm{#1}}}
\newcommand{\Lp}[1]{\ensuremath{\boldsymbol{L}_{#1}}}
\newcommand{\secref}[1]{\S\ref{#1}}
\newcommand{\sigalg}{\ensuremath{\mathfrak{F}}}
\newcommand{\xz}{\ensuremath{x_0}}
\newcommand{\ol}{\overline}
\renewcommand{\subset}{\ensuremath{\subseteq}}
\newcommand{\mn}{\ensuremath{\wedge}}
\newcommand{\DefEnd}{\hspace{\stretch{1}}{$\Diamond$}}
\newcommand{\AssumptionEnd}{\hspace{\stretch{1}}{$\diamondsuit$}}
\newcommand{\vphi}{\ensuremath{\varphi}}
\newcommand\Let{\mathrel{\mathop:\!\!=}}
\newtheoremstyle{nonum}{8pt}{8pt}{}{}{\itshape}{.}{ }{\thmname{#1}\thmnote{ (\mdseries #3)}}
\theoremstyle{nonum}
\newtheoremstyle{nonumt}{8pt}{8pt}{\slshape}{}{\bfseries}{.}{ }{\thmname{#1}\thmnote{ (\mdseries #3)}}
\theoremstyle{nonumt}
\numberwithin{equation}{section}
\newtheoremstyle{dcstyle}{8pt}{8pt}{\slshape}{}{\bfseries}{.}{ }{}
\theoremstyle{dcstyle}
\newtheorem{theorem}[equation]{Theorem}
\newtheorem{lemma}[equation]{Lemma}
\newtheorem{proposition}[equation]{Proposition}
\theoremstyle{definition}
\newtheorem{defn}[equation]{Definition}
\theoremstyle{remark}
\newtheorem{prgr}[equation]{}
\newtheorem{assumption}[equation]{Assumption}
\def\tagform@#1{\maketag@@@{\ignorespaces#1\unskip\@@italiccorr}}
\title[Maximizing the chance of hitting a target prior to extinction]{Maximizing the probability of attaining a target prior to extinction}
\thanks{This research was partially supported by the Swiss National Science Foundation under grant 200021-122072.}
\author[D.~Chatterjee]{Debasish Chatterjee}
\address{Automatic Control Laboratory\\ ETL I19, ETH Z\"urich\\ Physikstrasse 3\\ 8092 Z\"urich\\ Switzerland}
\email{chatterjee@control.ee.ethz.ch}
\urladdr{\url{http://control.ee.ethz.ch/~chatterd}}
\author[E.~Cinquemani]{Eugenio Cinquemani}
\address{INRIA-Grenoble - Rh\^one-Alpes\\ 655 avenue de l'Europe\\ Montbonnot\\ 38334 Saint Ismier cedex\\ France}
\email{Eugenio.Cinquemani@inria.fr}
\urladdr{\url{http://ibis.inrialpes.fr/article941.html}}
\author[J.~Lygeros]{John Lygeros}
\address{Automatic Control Laboratory\\ ETL I22, ETH Z\"urich\\ Physikstrasse 3\\ 8092 Z\"urich\\ Switzerland}
\email{lygeros@control.ee.ethz.ch}
\urladdr{\url{http://control.ee.ethz.ch/~lygeros}}
\date{\today}
\subjclass[2000]{Primary: 90C39, 90C40; Secondary: 93E20}
\begin{document}

	\begin{abstract}
		We present a dynamic programming-based solution to the problem of maximizing the probability of attaining a target set before hitting a cemetery set for a discrete-time Markov control process. Under mild hypotheses we establish that there exists a deterministic stationary policy that achieves the maximum value of this probability. We demonstrate how the maximization of this probability can be computed through the maximization of an expected total reward until the first hitting time to either the target or the cemetery set. Martingale characterizations of thrifty, equalizing, and optimal policies in the context of our problem are also established.
	\end{abstract}

	\keywords{dynamic programming, probability maximization, Markov control processes}

	\maketitle

	\section{Introduction}
	\label{s:intro}
		There are two basic categories of discrete-time controlled Markov processes that deal with random temporal horizons. The first is the well-known \textsl{optimal stopping problem}~\citep{ref:dynkinOptStopping}, in which the random horizon arises from some dynamic optimization protocol based on the past history of the process. The random `stopping time' thus generated is regarded as a decision variable. This problem arises in, among other areas, stochastic analysis, mathematical statistics, mathematical finance, and financial engineering; see the comprehensive monograph~\citep{ref:shiryaevOptStop} for details and further references. The second is relatively less common, and is characterized by the fact that the random horizon arises as a result of an endogenous event of the stochastic process, e.g., the process hitting a particular subset of the state-space, variations in the process paths crossing a certain threshold. This problem arises in, among others, optimization of target-level criteria~\citep{ref:dubinsHowToGamble, ref:bouakizTargetHitting}, optimal control of retirement investment funds~\citep{ref:boda04}, minimization of ruin probabilities in insurance funds~\citep{ref:schmidliInsurance}, `satisfaction of needs' problems in economics~\citep{ref:simonModelsofMan}, risk minimizing stopping problems~\citep{ref:ohtsuboValueIter}, attainability problems under stochastic perturbations~\citep{ref:kurzhanskiiAttainability}, and optimal control of Markov control processes up to an exit time~\citep{ref:borkarTopicsControlledMC}.

		The problem treated in this article falls under the second category above. In broad strokes, we consider a discrete-time Markov control process with Borel state and action spaces. We assume that there is a certain target set located inside a safe region, the latter being a subset of the state-space. The problem is to maximize the probability of attaining the target set before exiting the safe set (or equivalently, hitting the cemetery set or unsafe region). This `reach a good set while avoiding a bad set' formulation arises in, e.g., air traffic control, where aircraft try to reach their destination while avoiding collision with other aircraft or the ground despite uncertain weather conditions. It also arises in portfolio optimization, where it is desired to reach a target level of wealth without falling below a certain baseline capital with high probability. Finally, it forms the core of the computation of safe sets for hybrid systems where the `good' and the `bad' sets represent states from which a discrete transition into the unsafe set is possible~\citep{ref:lygerosUncertainHyb, ref:tomlin2000}. Special cases of this problem have been investigated in, e.g.,~\citep{ref:lygeros03, ref:prandini06} in the context of air traffic applications,~\citep{ref:prandiniPSafety, ref:prajna07} in the context of probabilistic safety,~\citep{ref:boda04} in the context of maximizing the probability of attaining a preassigned comfort level of retirement investment funds.

		It is clear from the description of our problem in the preceding paragraph that there are two random times involved, namely, the hitting times of the target and the cemetery sets. In this article we formulate our problem as the maximization of an expected total reward accumulated up to the minimum of these two hitting times. As such, this formulation falls under the broad framework of optimal control of Markov control processes up to an exit time, which has a long and rich history. It has mostly been studied as the minimization of an expected total cost until the first time that the state enters a given target set, see e.g.,~\cite[Chapter~II]{ref:borkarTopicsControlledMC}, \cite[Chapter~8]{ref:hernandez-lerma2}, and the references therein. In particular, if a unit cost is incurred as long as the state is outside the target set, then the problem of minimizing the cost accumulated until the state enters the target is known variously as the \textsl{pursuit problem}~\citep{ref:eatonzadeh62}, \textsl{transient programming}~\citep{ref:whittleOptimization}, the \textsl{first passage problem}~\citep{ref:dermanMDP, ref:kushnerIntroStochControl}, the \textsl{stochastic shortest path problem}~\citep{ref:bertsekasDP2}, and \textsl{control up to an exit time}~\citep{ref:borkarConvexAnalyticApproach, ref:borkarTopicsControlledMC, ref:kestenMCP}. Here we exploit certain additional structures of our problem in the dynamic programming equations that we derive leading to methods fine-tuned to the particular problem at hand.

		Our main results center around the assertion that there exists a deterministic stationary policy that maximizes the probability of hitting the target set before the cemetery set. This maximal probability as a function of the initial state is the optimal value function for our problem. We obtain a Bellman equation for our problem which is solved by the optimal value function. Furthermore, we provide martingale-theoretic conditions characterizing `thrifty', `equalizing', and optimal policies via methods derived from~\citep{ref:dubinsHowToGamble, ref:karatzasDP}; see also~\citep{ref:guoSemimartingaleMCP} and the references therein for martingale characterization of average optimality. The principal techniques employed in this article are similar to the ones in~\citep{ref:recstrat}, where the authors studied optimal control of a Markov control process up its first entry time to a safe set.  In~\citep{ref:recstrat} we developed a recovery strategy to enter a given target set from its exterior while minimizing a discounted cost. The problem was posed as one of minimizing the sum of a discounted cost-per-stage function $c$ up to the first entry time $\tau$ to a target set, namely, minimize $\EE^\pi_x\bigl[\sum_{t=0}^{\tau-1}\alpha^t c(x_t, a_t)\bigr]$ over a class of admissible policies $\pi$, where $\alpha\in\;]0,1[$ is a discount factor. Here we extend this approach to problems with two sets, a target and a cemetery, and the case of $\alpha = 1$.

		This article unfolds as follows. The main results are stated in~\secref{s:results}. In~\secref{s:prelims} we define the general setting of the problem, namely, Markov control processes on Polish spaces, their transition kernels, and the admissible control strategies. In~\secref{s:mainres} we present our main Theorem~\ref{t:exist} which guarantees the existence of a deterministic stationary policy that leads to the maximal probability of hitting the target set while avoiding the specified dangerous set, and also provides a Bellman equation that the value function must satisfy. In~\secref{s:martchar} we look at a martingale characterization of the optimal control problem; thrifty and equalizing policies are defined in the context of our problem, and we establish necessary and sufficient conditions for optimality in terms of thrifty and equalizing policies in Theorem~\ref{t:martcharpolicy}. We discuss related reward-per-stage functions and their relationships to our problem and treat several examples in~\secref{s:disc}. Proofs of the main results appear in~\secref{s:proof}. The article concludes in~\secref{s:concl} with a discussion of future work.

	\section{Main Results}
	\label{s:results}
	Our main results are stated in this section after some preliminary definitions and conventions.
	\subsection{Preliminaries}
	\label{s:prelims}
		We employ the following standard notations. Let $\N$ denote the natural numbers $\{1, 2, \ldots\}$ and $\Nz$ denote the nonnegative integers $\{0\}\cup\N$. Let $\indic{A}(\cdot)$ be the usual indicator function of a set $A$, i.e., $\indic{A}(\xi) = 1$ if $\xi\in A$ and $0$ otherwise. For real numbers $a$ and $b$ let $a\mn b \Let  \min\{a, b\}$. A function $f:X\lra\R$ restricted to $A\subset X$ is depicted as $f|_A$.

		Given a nonempty Borel set $X$ (i.e., a Borel subset of a Polish space), its Borel $\sigma$-algebra is denoted by $\Borelsigalg{X}$. By convention, when referring to sets or functions, ``measurable'' means ``Borel-measurable.'' If $X$ and $Y$ are nonempty Borel spaces, a \emph{stochastic kernel} on $X$ given $Y$ is a function $Q(\cdot|\cdot)$ such that $Q(\cdot|y)$ is a probability measure on $X$ for each fixed $y\in Y$, and $Q(B|\cdot)$ is a measurable function on $Y$ for each fixed $B\in\Borelsigalg X$.

		We briefly recall some standard definitions below, see, e.g.,~\citep{ref:hernandez-lerma1} for further details. A \emph{Markov control model} is a five-tuple
		\begin{equation}
			\label{e:mmodel}
			\bigl(X, A, \{A(x)\mid x\in X\}, Q, r\bigr)
		\end{equation}
		consisting of a nonempty Borel space $X$ called the \emph{state-space}, a nonempty Borel space $A$ called the \emph{control} or \emph{action set}, a family $\{A(x)\mid x\in X\}$ of nonempty measurable subsets $A(x)$ of $A$, where $A(x)$ denotes the set of \emph{feasible controls} or \emph{actions} when the system is in state $x\in X$ and with the property that the set $\mathbb K \Let  \bigl\{(x, a)\big|x\in X, a\in A(x)\bigr\}$ of feasible state-action pairs is a measurable subset of $X\times A$, a stochastic kernel $Q$ on $X$ given $\mathbb K$ called the \emph{transition law}, and a measurable function $r:\mathbb K\lra \R$ called the \emph{reward-per-stage function}.


		\begin{assumption}
		\label{a:basic}
		{\rm 
			The set $\mathbb K$ of feasible state-action pairs contains the graph of a measurable function from $X$ to $A$.
		}\AssumptionEnd
		\end{assumption}

		Consider the Markov model~\eqref{e:mmodel}, and for each $i=0, 1, \ldots,$ define the space $H_i$ of \emph{admissible histories} up to time $i$ as $H_0 \Let  X$ and $H_i \Let  \mathbb K^i\times X = \mathbb K\times H_{i-1}, i\in \N$. A generic element $h_i$ of $H_i$, which is called an admissible $i$-history, or simply $i$-history, is a vector of the form $h_i = (x_0, a_0, \ldots, x_{i-1}, a_{i-1}, x_i)$, with $(x_j, a_j)\in\mathbb K$ for $j=0, \ldots, i-1$, and $x_i\in X$. 
		Hereafter we let the $\sigma$-algebra generated by the history $h_i$ be denoted by $\sigalg_i$, $i\in\Nz$.

		Recall that a \emph{policy} is a sequence $\pi = (\pi_i)_{i\in\Nz}$ of stochastic kernels $\pi_i$ on the control set $A$ given $H_i$ satisfying the constraint $\pi_i(A(x_i)|h_i) = 1\;\;\fa h_i\in H_i, i\in\Nz$. The set of all policies is denoted by $\Pi$. Let $(\Omega, \sigalg)$ be the measurable space consisting of the (canonical) sample space $\Omega \Let  \ol H_\infty = (X\times A)^\infty$ and let $\sigalg$ be the corresponding product $\sigma$-algebra. The elements of $\Omega$ are sequences of the form $\omega = (x_0, a_0, x_1, a_1, \ldots)$ with $x_i\in X$ and $a_i\in A$ for all $i\in\Nz$; the projections $x_i$ and $a_i$ from $\Omega$ to the sets $X$ and $A$ are called \emph{state} and \emph{control} (or \emph{action}) variables, respectively.

		Let $\pi = (\pi_i)_{i\in\Nz}$ be an arbitrary control policy, and let $\nu$ be an arbitrary probability measure on $X$, referred to as the initial distribution. By a theorem of Ionescu-Tulcea~\cite[Chapter 3, \S4, Theorem~5]{ref:raoProbTheo}, there exists a unique probability measure $\mathsf P_\nu^\pi$ on $(\Omega, \sigalg)$ supported on $H^\infty$, such that for all $B\in\Borelsigalg X$, $C\in\Borelsigalg A$, $h_i\in H_i$, $i\in\Nz$, we have $\mathsf P_\nu^\pi(\xz\in B) = \nu(B)$,
		\begin{subequations}
		\label{e:probmeasure}
		\begin{align}
			\label{e:actiontrans}
			\mathsf P_\nu^\pi\bigl(a_i\in C\,\big|\, h_i\bigr) & = \pi_i\bigl(C\,\big|\, h_i\bigr)\\
			\label{e:statetrans}
			\mathsf P_\nu^\pi\bigl(x_{i+1}\in B\,\big|\, h_i, a_i\bigr) & = Q\bigl(B\,\big|\, x_i, a_i\bigr).
		\end{align}
		\end{subequations}

		\begin{defn}
		\label{d:mcp}
			{\rm 
			The stochastic process $\bigl(\Omega, \sigalg, \mathsf P_\nu^\pi, (x_i)_{i\in\Nz}\bigr)$ is called a discrete-time \emph{Markov control process}.
			}\DefEnd
		\end{defn}

		We note that the Markov control process in Definition~\ref{d:mcp} is not necessarily Markovian in the usual sense due to the dependence on the entire history $h_i$ in~\eqref{e:actiontrans}; however, it is well-known~\cite[Proposition~2.3.5]{ref:hernandez-lerma1} that if $(\pi_i)_{i\in\Nz}$ is restricted to a suitable subclass of policies, then $(x_i)_{i\in\Nz}$ is a Markov process.

		Let $\Phi$ denote the set of stochastic kernels $\vphi$ on $A$ given $X$ such that $\vphi(A(x)| x) = 1$ for all $x\in X$, and let $\mathbb F$ denote the set of all measurable functions $f:X\lra A$ satisfying $f(x)\in A(x)$ for all $x\in X$. The functions in $\mathbb F$ are called \emph{measurable selectors} of the set-valued mapping $X\ni x\mapsto A(x)\subset A$. Recall that a policy $\pi = (\pi_i)_{i\in\Nz}\in\Pi$ is said to be \emph{randomized Markov} if there exists a sequence $(\vphi_i)_{i\in\Nz}$ of stochastic kernels $\vphi_i\in\Phi$ such that $\pi_i(\cdot| h_i) = \vphi_i(\cdot| x_i)\;\; \fa h_i\in H_i, \; i\in\Nz$; \emph{deterministic Markov} if there exists a sequence $(f_i)_{i\in\Nz}$ of functions $f_i\in\mathbb F$ such that $\pi_i(\cdot| h_i) = \delta_{f(x_i)}(\cdot)$; \emph{deterministic stationary} if there exists a function $f\in\mathbb F$ such that $\pi_i(\cdot| h_i) = \delta_{f(x_i)}(\cdot)$. As usual let $\Pi$, $\Pi_{RM}$, $\Pi_{DM}$, and $\Pi_{DS}$ denote the set of all randomized history-dependent, randomized Markov, deterministic Markov, and deterministic stationary policies, respectively. The transition kernel $Q$ in~\eqref{e:statetrans} under a policy $\pi \Let  (\vphi_i)_{i\in\Nz}\in\Pi_{RM}$ is given by $\bigl(Q(\cdot|\cdot, \vphi_i)\bigr)_{i\in\Nz}$, which is defined as the transition kernel $\Borelsigalg{X}\times X\ni (B, x)\mapsto Q(B|x, \vphi_i(x)) \Let  \int_{A(x)}\vphi_i(\mrm da|x) Q(B|x, a)$. Occasionally we suppress the dependence of $\vphi_i$ on $x$ and write $Q(B|x, \vphi_i)$ in place of $Q(B|x, \vphi_i(x))$, and $r(x_j, \vphi_j) \Let  \int_{A(x_j)} \vphi_j(\mrm da|x_j)r(x_j, a)$. We simply write $f^\infty$ for a policy $(f, f, \ldots)\in \Pi_{DS}$.

	\subsection{Maximizing the Probability of Hitting a Target before a Cemetery Set}
	\label{s:mainres}
		Let $O$ and $K$ be two nonempty measurable subsets of $X$ with $O\subsetneqq K$. Let 
		\begin{align}
		\label{e:taudef}
			\tau \Let  \inf\bigl\{t \in\Nz \;\big|\; x_t\in O\bigr\} \quad\text{and}\quad \tau' \Let  \inf\bigl\{t \in\Nz \;\big|\; x_t\in X\setmin K\bigr\}
		\end{align}
		be the first hitting times of the above sets.\footnote{As usual we set the infimum over an empty set to be $\infty$.} These random times are stopping times with respect to the filtration $(\sigalg_n)_{n\in\Nz}$. Suppose that the objective is to maximize the probability that the state hits the set $O$ before exiting the set $K$; in symbols the objective is to attain
		\begin{equation}
		\label{e:problem}
			V^\star(x) \Let \sup_{\pi\in\Pi} V(\pi, x) \Let  \sup_{\pi\in\Pi}\PP^\pi_x\bigl(\tau < \tau', \tau < \infty\bigr),
		\end{equation}
		where the $\sup$ is taken over a class $\Pi$ of admissible policies.

		\begin{prgr}
		\label{pgr:policies}
		{\rm 
			\emph{Admissible policies.} It is clear at once that the class of admissible policies for the problem~\eqref{e:problem} is different from the classes considered in~\secref{s:prelims}. Indeed, since the process is killed at the stopping time $\tau\mn\tau'$, it follows that the class of admissible policies should also be truncated at the stage $\tau\mn\tau'-1$. For a given stage $t\in\Nz$ we define the $t$-th policy element $\pi_t$ only on the set $\{t < \tau\mn\tau'\}$. Note that with this definition $\pi_t$ becomes a $\sigalg_{t\mn\tau\mn\tau'}$-measurable randomized control. It is also immediate from the definitions of $\tau$ and $\tau'$ that if the initial condition $x\in O\cup(X\setmin K)$, then the set of admissible policies is empty in the sense that there is nothing to do by definition. Indeed, in this case $\tau\mn\tau' = 0$ and no control is needed. We are thus interested only in $x\in K\setmin O$, for otherwise the problem is trivial. In other words, the domain of $\pi_t$ is contained in the `spatial' region $\bigl\{(x, a)\in\mathbb K\,\big|\,x\in K\setmin O, a\in A(x)\bigr\}$. Equivalently, in view of the definitions of the `temporal' elements $\tau$ and $\tau'$, $\pi_t$ is well-defined on the set $\{t < \tau\mn\tau'\}$. We re-define $\mathbb K \Let  \bigl\{(x, a)\in\mathbb K\,\big|\,x\in K\setmin O, a\in A(x)\bigr\}$, and also let $\mathbb F$ to be the set of measurable selectors of the set-valued map $K\setmin O\ni x\mapsto A(x)\subset A$.
		}
		\end{prgr}

		\emph{Throughout this subsection we shall denote by $\Pi_M$ the class of Markov policies such that if $(\pi_t)_{t\in\Nz}\in\Pi_M$, then $\pi_t$ is defined on $\mathbb K$ for each $t$.}
		\medskip

		\begin{prgr}
		\label{pgr:recalldef}
		{\rm 
			Recall that a transition kernel $Q$ on a measurable space $X$ given another measurable space $Y$ is said to be \emph{strongly Feller} if the mapping $y\mapsto \int_X g(x) Q(\mrm dx| y)$ is continuous and bounded for every measurable and bounded function $g:X\lra\R$. A function $g:\mathbb K\lra\R$ is \emph{upper semicontinuous} (u.s.c.)\ if for every sequence $(x_j, a_j)_{j\in\N}\subset\mathbb K$ converging to $(x, a)\in\mathbb K$, we have $\limsup_{j\ra\infty} g(x_j, a_j) \le g(x, a)$; or, equivalently, if for every $r\in\R$, the set $\bigl\{(x, a)\in\mathbb K\,\big|\, g(x, a) \ge r\bigr\}$ is closed in $\mathbb K$. A set-valued map $\Psi:X\corresp Y$ between topological spaces is \emph{upper hemicontinuous at a point $x$} if for every neighborhood $U$ of $\Psi(x)$ there exists a neighborhood $V$ of $x$ such that $z\in V$ implies that $\Psi(z)\subset U$; $\Psi$ is \emph{upper hemicontinuous} if it is upper hemicontinuous at every $x$ in its domain. If $X$ is equipped with a $\sigma$-algebra $\Sigma$, then the set-valued map $\Psi$ is called \emph{weakly measurable} if $\Psi^{\ell}(G)\in\Sigma$ for every open $G\subset Y$, where $\Psi^{\ell}$ is the lower inverse of $\Psi$, defined by $\Psi^{\ell}(A) \Let  \{x\in X\mid \Psi(x)\cap A\neq\emptyset\}$. See, e.g.,~\cite[Chapters~17-18]{ref:aliprantisIDA} for further details on set-valued maps.\footnote{What we call ``set-valued maps'' are ``correspondences'' in~\citep{ref:aliprantisIDA}.} Whenever $B\subset X$ is a nonempty measurable set and we are concerned with any set-valued map $B\ni x\mapsto A(x)\subset A$, we let $B$ be equipped with the trace of $\Borelsigalg X$ on $B$. Let $b\Borelsigalg{X}^+$ denote the convex cone of nonnegative, bounded, and measurable real-valued functions on $X$, and we define $\bar B \Let  \bigl\{g\in\Lp\infty(X)\,\big|\,g|_{X\setmin K} = 0, \norm{g}_{\Lp\infty{(X)}} \le 1\bigr\}$.
		}
		\end{prgr}

		\begin{assumption}
			\label{a:key}
			{\rm 
			In addition to Assumption~\ref{a:basic}, we stipulate that
			\begin{enumerate}[align=right, leftmargin=*, widest=iii, label=(\roman*)]
				\item the set-valued map $K\setmin O\ni x\mapsto A(x)\subset A$ is compact-valued, upper hemicontinuous, and weakly measurable;
				\item the transition kernel $Q$ on $X$ given $\mathbb K$ is strongly Feller, i.e., the mapping $\mathbb K\ni(x, a)\mapsto \int_X Q(\mrm dy|x, a) g(y)$ is continuous and bounded for all bounded and measurable functions $g:X\lra\R$.\AssumptionEnd
			\end{enumerate}
			}
		\end{assumption}

		The following theorem gives basic existence results for the problem~\eqref{e:problem}; a proof is presented in~\secref{s:mainproof}.


		\begin{theorem}
		\label{t:exist}
			Suppose that Assumption~\eqref{a:key} holds, and that $\tau\mn\tau'$ is finite for every policy in $\Pi_M$. Then:
			\begin{enumerate}[label={\rm (\roman*)}, leftmargin=*, widest=iii, align=right]
				\item The value function $V^\star$ is a pointwise bounded and measurable solution to the \emph{Bellman equation} in $\psi$:
				\begin{equation}
				\label{e:bellmaneqn}
					\psi(x) = \indic{O}(x) + \indic{K\setmin O}(x) \max_{a\in A(x)}\int_X Q(\mrm dy|x, a)\indic{K}(y)\psi(y)\quad \fa x\in X.
				\end{equation}
				Moreover, $V^\star$ is minimal in $\bar B\cap b\Borelsigalg{X}^+$.
				\item There exists a measurable selector $f_\star\in\mathbb F$ such that $f_\star(x)\in A(x)$ attains the maximum in~\eqref{e:bellmaneqn} for each $x\in K\setmin O$, which satisfies\label{co:exist:2}
				\begin{equation}
				\label{e:selectorcond}
					V^\star(x) = 
					\begin{cases}
						1 & \text{if } x\in O,\\
						\displaystyle{\int_{K} Q(\mrm dy|x, f_\star)\,V^\star(y)} & \text{if } x\in K\setmin O,\\
						0 & \text{otherwise},
					\end{cases}
				\end{equation}
				where $V^\star$ is as defined in~\eqref{e:Vstar}. Moreover, the deterministic stationary policy $f_\star^\infty$ is optimal. Conversely, if $f_\star^\infty$ is optimal, then it satisfies~\eqref{e:selectorcond}.
			\end{enumerate}
		\end{theorem}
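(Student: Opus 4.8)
The plan is to solve~\eqref{e:problem} by value iteration, to identify the resulting limit with $V^\star$ through a pair of (super)martingale arguments, and then to read off minimality and the converse. Let $T$ be the operator on the right-hand side of~\eqref{e:bellmaneqn}, acting on bounded measurable $\psi\colon X\lra\R$, so that $(T\psi)(x)=\indic{O}(x)+\indic{K\setmin O}(x)\sup_{a\in A(x)}\int_X Q(\mrm dy|x,a)\indic{K}(y)\psi(y)$; observe that $T$ is monotone and maps $[0,1]$-valued functions to $[0,1]$-valued functions. Put $v_0\Let\indic{O}$ and $v_{n+1}\Let Tv_n$. First I would show by induction that each $v_n$ is measurable, $[0,1]$-valued, and satisfies $v_n\le v_{n+1}$: the strong Feller hypothesis in Assumption~\ref{a:key} makes $(x,a)\mapsto\int_X Q(\mrm dy|x,a)\indic{K}(y)v_n(y)$ bounded and jointly continuous on $\mathbb K$, so by the measurable maximum theorem~\cite[Theorem~18.19]{ref:aliprantisIDA}---whose hypotheses are exactly that $K\setmin O\ni x\mapsto A(x)$ be weakly measurable with nonempty compact values---the map $x\mapsto\max_{a\in A(x)}\int_X Q(\mrm dy|x,a)\indic{K}(y)v_n(y)$ is measurable on $K\setmin O$, whence $v_{n+1}$ is measurable on $X$; and $v_0\le Tv_0$ together with monotonicity of $T$ gives $v_n\le v_{n+1}$. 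Hence $v_n\uparrow V_\infty$ pointwise for some measurable $V_\infty$ with $0\le V_\infty\le 1$ and $V_\infty|_{X\setmin K}=0$, so $V_\infty\in\bar B\cap b\Borelsigalg X^+$.

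Next I would verify that $V_\infty$ solves~\eqref{e:bellmaneqn}. On $O$ and on $X\setmin K$ this is immediate, and on $K\setmin O$ it splits into two inequalities. Since $v_n\le V_\infty$, we get $V_\infty(x)=\sup_n v_{n+1}(x)\le\sup_{a\in A(x)}\int_X Q(\mrm dy|x,a)\indic{K}(y)V_\infty(y)=(TV_\infty)(x)$. Conversely, since $n\mapsto\int_X Q(\mrm dy|x,a)\indic{K}(y)v_n(y)$ is nondecreasing, the suprema over $a$ and over $n$ may be interchanged, and monotone convergence gives $(TV_\infty)(x)=\sup_a\sup_n\int_X Q(\mrm dy|x,a)\indic{K}(y)v_n(y)=\sup_n v_{n+1}(x)=V_\infty(x)$. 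That the supremum in $(TV_\infty)(x)$ is in fact a maximum attained by some measurable selector $f_\star\in\mathbb F$ follows once more from the strong Feller property---which makes $(x,a)\mapsto\int_X Q(\mrm dy|x,a)\indic{K}(y)V_\infty(y)$ continuous on $\mathbb K$---compactness of $A(x)$, and the measurable maximum theorem. Hence $V_\infty$ satisfies~\eqref{e:bellmaneqn} with $\max$ in place of $\sup$, and $f_\star$ satisfies the identity of~\eqref{e:selectorcond} with $V_\infty$ in place of $V^\star$.

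The crux is the identification $V_\infty=V^\star$. For $V^\star\le V_\infty$, fix any $\pi\in\Pi$ and any initial state $x$, and set $Y_t\Let V_\infty(x_{t\mn\tau\mn\tau'})$. Using~\eqref{e:statetrans}, the constraint $\pi_t(A(x_t)\mid h_t)=1$ on $\{t<\tau\mn\tau'\}$, the identity $V_\infty=\indic{K}V_\infty$, and the Bellman equation on $K\setmin O$, one checks that $(Y_t)_{t\in\Nz}$ is a bounded $(\sigalg_t)$-supermartingale under $\PP^\pi_x$; hence it converges $\PP^\pi_x$-a.s.\ to some $Y_\infty$ with $\Expec{Y_\infty}\le Y_0=V_\infty(x)$. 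Since $O$ and $X\setmin K$ are disjoint and $V_\infty$ equals $1$ on the former and $0$ on the latter, on $\{\tau\mn\tau'<\infty\}$ we have $Y_\infty=V_\infty(x_{\tau\mn\tau'})=\indic{O}(x_{\tau\mn\tau'})=\indic{\{\tau<\tau'\}}$, while on $\{\tau\mn\tau'=\infty\}$ we have $\tau=\infty$ and $Y_\infty\ge0$; in both cases $Y_\infty\ge\indic{\{\tau<\tau',\,\tau<\infty\}}$, so $V(\pi,x)\le\Expec{Y_\infty}\le V_\infty(x)$, and taking the supremum over $\pi$ gives $V^\star\le V_\infty$. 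For the reverse inequality I would run the same computation under $f_\star^\infty\in\Pi_M$: by~\eqref{e:selectorcond} for $V_\infty$ the supermartingale inequality becomes an equality, so $Y_t=V_\infty(x_{t\mn\tau\mn\tau'})$ is a bounded $(\sigalg_t)$-martingale under $\PP^{f_\star^\infty}_x$, and because $\tau\mn\tau'<\infty$ $\PP^{f_\star^\infty}_x$-a.s.\ by hypothesis we have $Y_t\to\indic{\{\tau<\tau',\,\tau<\infty\}}$ a.s.; bounded convergence then yields $V(f_\star^\infty,x)=\lim_t\Expec{Y_t}=Y_0=V_\infty(x)$, whence $V^\star\ge V(f_\star^\infty,\cdot)=V_\infty$. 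Thus $V^\star=V_\infty$, which proves (i) apart from minimality, furnishes~\eqref{e:selectorcond}, and shows that $f_\star^\infty$ is optimal.

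It remains to dispatch minimality and the converse. If $u\in\bar B\cap b\Borelsigalg X^+$ satisfies $u=Tu$, then $u\ge0$ and $u\equiv1$ on $O$ give $u\ge v_0$, so monotonicity of $T$ yields $u=Tu\ge Tv_0=v_1$ and, inductively, $u\ge v_n$ for every $n$, hence $u\ge V_\infty=V^\star$; this is the claimed minimality. For the converse, suppose $f^\infty$ is optimal, i.e.\ $\PP^{f^\infty}_x(\tau<\tau')=V^\star(x)$ for every $x$. Under a deterministic stationary policy $(x_t)_{t\in\Nz}$ is a time-homogeneous Markov process~\cite[Proposition~2.3.5]{ref:hernandez-lerma1}, so conditioning on the first transition out of $x\in K\setmin O$ gives $V^\star(x)=\PP^{f^\infty}_x(\tau<\tau')=\int_X Q(\mrm dy|x,f(x))\bigl[\indic{O}(y)+\indic{K\setmin O}(y)\PP^{f^\infty}_y(\tau<\tau')\bigr]=\int_K Q(\mrm dy|x,f)\,V^\star(y)$, which is exactly~\eqref{e:selectorcond} (the cases $x\in O$ and $x\in X\setmin K$ being built into the definition of $V^\star$). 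I expect the main obstacle to be not any single estimate but the bookkeeping threaded through the whole argument: keeping straight the boundary behaviour at $O$ and at $X\setmin K$ and the interaction of the two stopping times in every (super)martingale step, and invoking the measurable-selection machinery of Assumption~\ref{a:key} precisely where joint continuity of $(x,a)\mapsto\int_X Q(\mrm dy|x,a)\indic{K}(y)\psi(y)$ and compactness of $A(x)$ are needed; in particular, since finiteness of $\tau\mn\tau'$ is postulated only over $\Pi_M$, the bound $V^\star\le V_\infty$ must proceed via supermartingale convergence and cannot appeal to an almost-sure limit for an arbitrary history-dependent $\pi$.
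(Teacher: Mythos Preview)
Your proof is correct and follows the same broad plan as the paper---value iteration from $v_0=\indic{O}$, measurable-selection arguments under the strong Feller hypothesis, identification of the limit with $V^\star$, and one-step conditioning for the converse---but the identification step is organised differently. The paper proceeds by direct iteration: it first shows (Lemma~\ref{l:Vstarsatisfy}) that $V^\star\le v^\star\Let\lim_n v_n$ by comparing $v_n$ with the $n$-step truncated value, then proves (Lemma~\ref{l:dominate}) that any $u\in\bar B\cap b\Borelsigalg X^+$ with $u\le Tu$ satisfies $u\le V^\star$ by unrolling $u\le Tu$ along a greedy selector and using finiteness of $\tau\mn\tau'$ to kill the residual term; the identity $v^\star=Tv^\star$ then closes the loop. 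Your route replaces this with the stopped process $Y_t=V_\infty(x_{t\mn\tau\mn\tau'})$: a supermartingale under every $\pi$ gives $V^\star\le V_\infty$, and a martingale under the greedy $f_\star^\infty$ together with $\tau\mn\tau'<\infty$ a.s.\ gives the reverse inequality. This is essentially the content of the paper's Lemma~\ref{l:bothsupmart} pressed into service earlier; it is a bit slicker and, as you note, yields the upper bound over all of $\Pi$ without appealing to almost-sure finiteness of $\tau\mn\tau'$. Your $\sup_a\sup_n$ interchange for $TV_\infty=V_\infty$ is also cleaner than the paper's continuity-and-contradiction argument. One point worth flagging: for minimality you show $u\ge v_n$, hence $u\ge V^\star$, for any fixed point $u$; the paper instead invokes Lemma~\ref{l:dominate} to conclude $u\le V^\star$. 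Taken together these two arguments actually pin down $V^\star$ as the \emph{unique} fixed point in $\bar B\cap b\Borelsigalg X^+$, and your inequality is the one that literally matches the word ``minimal'' in the statement.
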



		\begin{prgr}
		\label{pgr:altrep}
		{\rm 
			As a matter of notation we shall henceforth represent the functional equation~\eqref{e:selectorcond} with the less formal version:
			\begin{equation}
			\label{e:selectorcondalt}
				V^\star(x) = \indic{O}(x) + \indic{K\setmin O}(x)\int_{K}Q(\mrm dy|x, f_\star)\,V^\star(y)\quad \fa x\in X.
			\end{equation}
			Note that the measure $Q(\cdot|x, f_\star)$ is not well-defined for $x\in O\cup(X\setmin K)$ for $f\in \mathbb F$ in view of the definition in paragraph~\ref{pgr:policies}. As such the integral $\int_K Q(\mrm dy|x, f_\star)\,V^\star(y)$ is undefined for $x\in O\cup(X\setmin K)$. However, to preserve the form of~\eqref{e:bellmaneqn} and simplify notation, we shall stick to the representation~\eqref{e:selectorcondalt} by defining any object that is written as an integral of a bounded measurable function with respect to the measure $Q(\cdot|x, f)$ to be $0$ whenever $x\in O\cup(X\setmin K)$ and $f \in\mathbb F$.
		}
		\end{prgr}


	\subsection{A Martingale Characterization}
	\label{s:martchar}
		\emph{We now return to the more general class of all possible policies (not just Markovian), denoted by $\Pi$.}

		Fix an initial state $x\in X$ and a policy $\pi\in\Pi$. For each $n\in\N$ we define the random variable $W_n(\pi, x) \Let  \sum_{t=0}^{(n-1)\mn\tau\mn\tau'}\indic{O}(x_t)$. Let us consider the process $(\zeta_n)_{n\in\Nz}$ defined by
		\begin{equation}
		\label{e:zetadef}
		\begin{aligned}
			\zeta_0 & \Let  V^\star(x),\\
			\zeta_n & \Let  W_n(\pi, x) + \indic{K\setmin O}(x_{(n-1)\mn\tau\mn\tau'})(\indic{K}\cdot V^\star)(x_{n\mn\tau\mn\tau'}),\;\;n\in\N.
		\end{aligned}
		\end{equation}

		We follow the basic framework of~\citep{ref:karatzasDP}.
		\begin{defn}
		{\rm 
			A policy $\pi\in\Pi$ is called \emph{thrifty at $x\in X$} if $V^\star(x) = \Lambda^\pi(x)$, and \emph{equalizing at $x\in X$} if $\Lambda^\pi(x) = V(\pi, x)$. The action $a_n$, defined on $\{\tau\mn\tau' > n\}$, is said to \emph{conserve $V^\star$ at $x_n$} if $\indic{O}(x_n) + \indic{K\setmin O}(x_n)\int_K Q(\mrm dy|x_n, a_n) V^\star(y) = V^\star(x_n)$.
		}\DefEnd
		\end{defn}

		Connections between equalizing, thrifty, and optimal policies for our problem~\eqref{e:problem} are established by the following


		\begin{theorem}
		\label{t:martcharpolicy}
			A policy $\pi\in\Pi$ is 
			\begin{itemize}[label=$\circ$, leftmargin=*]
				\item equalizing at $x\in X$ if and only if 
				\[
					\lim_{n\to\infty}\EE^\pi_x\bigl[\indic{K\setmin O}(x_{(n-1)\mn\tau\mn\tau'})(\indic{K}V^\star)(x_{n\mn\tau\mn\tau'})\bigr] = 0;
				\]
				\item optimal at $x\in X$ if and only if $\pi$ is both thrifty and equalizing.
			\end{itemize}
		\end{theorem}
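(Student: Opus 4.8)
The plan is to recast the process $(\zeta_n)_{n\in\Nz}$ of \eqref{e:zetadef} as a bounded supermartingale and to extract both equivalences from the chain of inequalities $V(\pi,x)\le\Lambda^\pi(x)\le V^\star(x)$ that results.

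First I would prove that for every $\pi\in\Pi$ and $x\in X$ the process $(\zeta_n,\sigalg_n)_{n\in\Nz}$ is a $\PP^\pi_x$-supermartingale taking values in $[0,1]$. Adaptedness and boundedness use that $\tau,\tau'$ are stopping times and that $V^\star$ vanishes on $X\setmin K$ (as $V^\star\in\bar B$), so that $\indic{K}V^\star=V^\star$ identically. For the supermartingale inequality I would compute $\EE^\pi_x[\zeta_{n+1}-\zeta_n\mid\sigalg_n]$ after splitting on $\{n<\tau\mn\tau'\}\in\sigalg_n$. On $\{n\ge\tau\mn\tau'\}$ a bookkeeping of the indices in \eqref{e:zetadef} gives $\zeta_{n+1}=\zeta_n$, because $x_{\tau\mn\tau'}\in O\cup(X\setmin K)$ makes the term $\indic{K\setmin O}(x_{(n-1)\mn\tau\mn\tau'})(\indic{K}V^\star)(x_{n\mn\tau\mn\tau'})$ collapse to $\indic{O}(x_{\tau\mn\tau'})$. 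On $\{n<\tau\mn\tau'\}$ one has $x_n\in K\setmin O$, the $W$-increment equals $\indic{O}(x_n)=0$, and $\zeta_n=W_n+V^\star(x_n)$ there, so that \eqref{e:statetrans} together with the Bellman equation \eqref{e:bellmaneqn} of Theorem~\ref{t:exist} yields
\[
\EE^\pi_x\bigl[\zeta_{n+1}-\zeta_n\mid\sigalg_n\bigr]\indic{\{n<\tau\mn\tau'\}}=\indic{\{n<\tau\mn\tau'\}}\int_{A(x_n)}\pi_n(\mrm da\mid h_n)\Bigl(\int_X Q(\mrm dy\mid x_n,a)V^\star(y)-V^\star(x_n)\Bigr)\le 0,
\]
with equality exactly when $\pi_n(\cdot\mid h_n)$-almost every action conserves $V^\star$ at $x_n$. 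The bounded martingale convergence theorem then gives $\zeta_n\to\zeta_\infty$ $\PP^\pi_x$-a.s.\ and in $L^1$; since $n\mapsto\EE^\pi_x[\zeta_n]$ is nonincreasing, it converges to $\Lambda^\pi(x)=\EE^\pi_x[\zeta_\infty]$, and in particular $\Lambda^\pi(x)\le\EE^\pi_x[\zeta_0]=V^\star(x)$.

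Next I would compare $\Lambda^\pi(x)$ with $V(\pi,x)$. By \eqref{e:zetadef}, $\zeta_n-W_n=\indic{K\setmin O}(x_{(n-1)\mn\tau\mn\tau'})(\indic{K}V^\star)(x_{n\mn\tau\mn\tau'})\in[0,1]$; meanwhile $(W_n)$ is nondecreasing and $W_n\uparrow W_\infty=\indic{\{\tau<\tau',\,\tau<\infty\}}$ pathwise (since $x_t\notin O$ for $t<\tau$, and $O\subset K$ forces $\tau\neq\tau'$), whence monotone convergence gives $\EE^\pi_x[W_n]\uparrow\PP^\pi_x(\tau<\tau',\tau<\infty)=V(\pi,x)$. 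Subtracting the two limits,
\[
\Lambda^\pi(x)-V(\pi,x)=\lim_{n\to\infty}\EE^\pi_x\bigl[\zeta_n-W_n\bigr]=\lim_{n\to\infty}\EE^\pi_x\bigl[\indic{K\setmin O}(x_{(n-1)\mn\tau\mn\tau'})(\indic{K}V^\star)(x_{n\mn\tau\mn\tau'})\bigr]\ge 0,
\]
the right-hand limit existing because the left-hand side does. Thus $V(\pi,x)\le\Lambda^\pi(x)\le V^\star(x)$, and $\pi$ is equalizing at $x$, i.e.\ $\Lambda^\pi(x)=V(\pi,x)$, if and only if this last limit vanishes; this is the first bullet.

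For the second bullet, note that $V(\pi,x)\le V^\star(x)=\sup_{\pi'\in\Pi}V(\pi',x)$, so $\pi$ is optimal at $x$ precisely when $V(\pi,x)=V^\star(x)$. If $\pi$ is both thrifty and equalizing at $x$, then $V^\star(x)=\Lambda^\pi(x)=V(\pi,x)$, so $\pi$ is optimal; conversely, if $\pi$ is optimal then the two outer terms of $V(\pi,x)\le\Lambda^\pi(x)\le V^\star(x)$ coincide, forcing $\Lambda^\pi(x)=V^\star(x)$ (thrifty) and $\Lambda^\pi(x)=V(\pi,x)$ (equalizing). I expect the only genuine obstacle to be the index bookkeeping in the supermartingale step — carrying the two shifted times $(n-1)\mn\tau\mn\tau'$ and $n\mn\tau\mn\tau'$ through \eqref{e:zetadef}, handling the regimes $n<\tau\mn\tau'$, $n=\tau\mn\tau'$, $n>\tau\mn\tau'$ (and the degenerate initial conditions $x\in O$ or $x\in X\setmin K$, where $\tau\mn\tau'=0$ and every $\zeta_n$ is constant), and verifying that the boundary term of $\zeta_n$ vanishes once the killing time has been reached. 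Everything downstream — the two convergence arguments and the passage from the inequality chain to the stated equivalences — is routine.
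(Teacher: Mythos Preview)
Your proposal is correct and follows essentially the same route as the paper: establish that $(\zeta_n)_{n\in\Nz}$ is a bounded nonnegative $(\sigalg_n)_{n\in\Nz}$-supermartingale (the paper isolates this as a separate lemma, while you sketch it inline), and then read off both equivalences from the chain $V(\pi,x)\le\Lambda^\pi(x)\le V^\star(x)$ obtained by splitting $\EE^\pi_x[\zeta_n]=\EE^\pi_x[W_n]+\EE^\pi_x[\text{boundary term}]$ and passing to the limit. Your treatment is in fact more explicit than the paper's on the monotone convergence $\EE^\pi_x[W_n]\uparrow V(\pi,x)$ and on the case analysis across the killing time; the only quibble is that on $\{n>\tau\mn\tau'\}$ the boundary term vanishes and it is $W_n$ that carries $\indic{O}(x_{\tau\mn\tau'})$, but you have already flagged this bookkeeping as the place to be careful and the conclusion $\zeta_{n+1}=\zeta_n$ on $\{n\ge\tau\mn\tau'\}$ is correct.
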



		A connection between thrifty policies, the process $(\zeta_n)_{n\in\Nz}$ defined in~\eqref{e:zetadef}, and actions conserving the optimal value function $V^\star$ is established by the following


		\begin{theorem}
		\label{t:thriftychar}
			For a given policy $\pi\in\Pi$ and an initial state $x\in X$ the following are equivalent:
			\begin{enumerate}[label={\rm (\roman*)}, leftmargin=*, align=right, widest=iii]
				\item $\pi$ is trifty at $x$;
				\item $(\zeta_n)_{n\in\Nz}$ is a $(\sigalg_n)_{n\in\Nz}$ -martingale under $\PP^\pi_x$;
				\item $\PP^\pi_x$-almost everywhere on $\{\tau\mn\tau' > n\}$ the action $a_n$ conserves $V^\star$.
			\end{enumerate}
		\end{theorem}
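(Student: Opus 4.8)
The plan is to prove that, under \emph{every} admissible policy $\pi\in\Pi$, the process $(\zeta_n)_{n\in\Nz}$ of~\eqref{e:zetadef} is a nonnegative bounded $(\sigalg_n)_{n\in\Nz}$-supermartingale under $\PP^\pi_x$; to pin down exactly when its one-step inequality is an equality (namely, when $a_n$ conserves $V^\star$ on $\{\tau\mn\tau'>n\}$); and then to tie the resulting ``constant mean'' property of this supermartingale to thriftiness through the identity $\Lambda^\pi(x)=\lim_{n\to\infty}\EE^\pi_x\bigl[\zeta_n\bigr]$, the limit existing by the monotonicity supplied by the supermartingale property.

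First I would record that $0\le\zeta_n\le 2$, since $W_n(\pi,x)$ takes values in $\{0,1\}$ and $0\le V^\star\le 1$ (a probability, measurable by Theorem~\ref{t:exist}(i)), and that $\zeta_n$ is $\sigalg_n$-measurable, so all conditional expectations below are well defined. The core is a computation of $\zeta_{n+1}-\zeta_n$, split over the $\sigalg_n$-measurable event $\{\tau\mn\tau'>n\}=\bigcap_{t=0}^{n}\{x_t\in K\setmin O\}$ and its complement. On $\{\tau\mn\tau'\le n\}$, using $O\subset K$ (so $O$ and $X\setmin K$ are disjoint) together with $V^\star|_{O}\equiv 1$, a short case check gives $\zeta_{n+1}=\zeta_n=\indic{O}(x_{\tau\mn\tau'})$: past $\tau\mn\tau'$ the running sum $W_\bullet$ no longer changes and the prefactor $\indic{K\setmin O}(\cdot)$ in the value-to-go term vanishes. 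On $\{\tau\mn\tau'>n\}$ one has $x_0,\dots,x_n\in K\setmin O$, hence $W_n(\pi,x)=0$, $\indic{K\setmin O}(x_{n-1})=1$ and $\indic{K}(x_n)=1$, so $\zeta_n=V^\star(x_n)$, and likewise $\zeta_{n+1}=(\indic{K}V^\star)(x_{n+1})$; conditioning first on $\sigalg_n\mx\sigma(a_n)$ via~\eqref{e:statetrans} and then integrating $a_n$ against $\pi_n(\cdot\mid h_n)$ via~\eqref{e:actiontrans} yields, on $\{\tau\mn\tau'>n\}$,
\[
	\EE^\pi_x\bigl[\zeta_{n+1}\mid\sigalg_n\bigr]
	= \EE^\pi_x\biggl[\,\int_X Q(\mrm dy\mid x_n,a_n)\,\indic{K}(y)\,V^\star(y)\,\biggm|\,\sigalg_n\,\biggr].
\]
By the Bellman equation~\eqref{e:bellmaneqn} at $x_n\in K\setmin O$, the integrand is pointwise $\le\max_{a\in A(x_n)}\int_X Q(\mrm dy\mid x_n,a)\indic{K}(y)V^\star(y)=V^\star(x_n)=\zeta_n$, so $\EE^\pi_x[\zeta_{n+1}\mid\sigalg_n]\le\zeta_n$ everywhere and $(\zeta_n)_{n\in\Nz}$ is a $\PP^\pi_x$-supermartingale.

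For (ii)$\,\Leftrightarrow\,$(iii): on $\{\tau\mn\tau'>n\}$ the nonnegative quantity $\zeta_n-\EE^\pi_x[\zeta_{n+1}\mid\sigalg_n]$ equals $\EE^\pi_x[D_n\mid\sigalg_n]$, where $D_n\Let V^\star(x_n)-\int_X Q(\mrm dy\mid x_n,a_n)\indic{K}(y)V^\star(y)\ge 0$; since $D_n\ge 0$ and $\{\tau\mn\tau'>n\}\in\sigalg_n$, this conditional expectation vanishes $\PP^\pi_x$-a.e.\ on $\{\tau\mn\tau'>n\}$ if and only if $D_n=0$ $\PP^\pi_x$-a.e.\ there, which (using $\indic{O}(x_n)=0$ and $\indic{K\setmin O}(x_n)=1$ on that event) is exactly the assertion that $a_n$ conserves $V^\star$ on $\{\tau\mn\tau'>n\}$. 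Combined with the frozen behavior on $\{\tau\mn\tau'\le n\}$, the supermartingale $(\zeta_n)_{n\in\Nz}$ is therefore a martingale precisely when $a_n$ conserves $V^\star$ on $\{\tau\mn\tau'>n\}$ for every $n$. For (i)$\,\Leftrightarrow\,$(ii): since $(\zeta_n)$ is a bounded supermartingale with $\EE^\pi_x[\zeta_0]=V^\star(x)$, the sequence $n\mapsto\EE^\pi_x[\zeta_n]$ is nonincreasing with limit $\Lambda^\pi(x)$; if $\pi$ is thrifty at $x$ then $\Lambda^\pi(x)=V^\star(x)=\EE^\pi_x[\zeta_0]$ forces $\EE^\pi_x[\zeta_n]\equiv V^\star(x)$, and a supermartingale with constant mean is a martingale (for $m\le n$ the identity $\EE^\pi_x[\zeta_m-\EE^\pi_x[\zeta_n\mid\sigalg_m]]=0$ with a nonnegative integrand forces $\EE^\pi_x[\zeta_n\mid\sigalg_m]=\zeta_m$ a.e.), giving (ii); conversely, if $(\zeta_n)$ is a martingale then $\EE^\pi_x[\zeta_n]\equiv\EE^\pi_x[\zeta_0]=V^\star(x)$, whence $\Lambda^\pi(x)=V^\star(x)$, i.e.\ $\pi$ is thrifty, giving (i).

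The step I expect to demand the most care is the increment computation: keeping the two truncations $(n-1)\mn\tau\mn\tau'$ and $n\mn\tau\mn\tau'$ straight across the several sub-cases, confirming $\sigalg_n$-measurability of $\{\tau\mn\tau'>n\}$, and verifying that on this event the value-to-go term collapses to exactly $V^\star(x_n)$. Everything else --- ``a supermartingale with constant mean is a martingale'' and the passage from a vanishing conditional expectation of a nonnegative variable to an a.e.\ equality --- is routine.
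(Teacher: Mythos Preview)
Your proposal is correct and follows essentially the same route as the paper: establish that $(\zeta_n)_{n\in\Nz}$ is a bounded nonnegative $(\sigalg_n)_{n\in\Nz}$-supermartingale (the paper isolates this as Lemma~\ref{l:bothsupmart}, via the key inequality~\eqref{e:keymartineq}), then identify equality in the one-step supermartingale inequality with conservation of $V^\star$ on $\{\tau\mn\tau'>n\}$, and finally link thriftiness to the martingale property through ``constant mean $\Leftrightarrow$ martingale'' and the identity $\Lambda^\pi(x)=\lim_n\EE^\pi_x[\zeta_n]$. The only cosmetic difference is that the paper argues cyclically (i)$\Rightarrow$(ii)$\Rightarrow$(iii)$\Rightarrow$(i) whereas you prove the two equivalences (i)$\Leftrightarrow$(ii) and (ii)$\Leftrightarrow$(iii); your case split on $\{\tau\mn\tau'\le n\}$ versus $\{\tau\mn\tau'>n\}$ is a slightly more explicit version of the same increment computation carried out in Lemma~\ref{l:bothsupmart}.
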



		It is possible to make a connection, relying purely on martingale-theoretic arguments, between the process $(\zeta_n)_{n\in\Nz}$ and the value function corresponding to an optimal policy. This is the content of the following theorem, which may be viewed as a partial converse to Theorem~\ref{t:thriftychar}.


		\begin{theorem}
		\label{t:Vprimechar}
			Suppose that either one of the stopping times $\tau$ and $\tau'$ defined in~\eqref{e:taudef} is finite for every policy in $\Pi$. Let $V'$ be a nonnegative measurable function such that $V'|_O = 1$, $V'|_{X\setmin K} = 0$, and bounded above by $1$ elsewhere. For a policy $\pi\in\Pi$ define the process $(\zeta'_n)_{n\in\Nz}$ as
			\begin{equation}
			\label{e:zetapdef}
			\begin{aligned}
				\zeta'_0 & \Let  V'(x),\\
				\zeta'_n & \Let  W_n(\pi, x) + \indic{K\setmin O}(x_{(n-1)\mn\tau\mn\tau'})(\indic{K}\cdot V')(x_{n\mn\tau\mn\tau'}),\;\;n\in\N,
			\end{aligned}
			\end{equation}
			where $W_n(\pi, x)$ is as in~\eqref{e:zetadef}. If for some policy $\pi^\star\in\Pi$ the process $(\zeta'_n)_{n\in\Nz}$ is a $(\sigalg_n)_{n\in\Nz}$ -martingale under $\PP^{\pi^\star}_x$, then $V'(x) = \PP^{\pi^\star}_x\bigl(\tau < \tau', \tau < \infty\bigr)$.
		\end{theorem}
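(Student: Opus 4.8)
The plan is to use the only hypothesis at our disposal — that $(\zeta'_n)_{n\in\Nz}$ is a $(\sigalg_n)_{n\in\Nz}$-martingale under $\PP^{\pi^\star}_x$ — to pin down $\EE^{\pi^\star}_x\bigl[\zeta'_n\bigr]$ for every $n$, and then to identify the limit of this quantity as $n\to\infty$ by handling the two summands of $\zeta'_n$ separately. Since the initial state is deterministic we have $x_0 = x$ $\PP^{\pi^\star}_x$-a.s., so $\sigalg_0$ is $\PP^{\pi^\star}_x$-trivial; hence the martingale identity $\EE^{\pi^\star}_x\bigl[\zeta'_n\bst\sigalg_0\bigr] = \zeta'_0$ gives $\EE^{\pi^\star}_x\bigl[\zeta'_n\bigr] = V'(x)$ for all $n\in\N$. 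All expectations in sight are finite, since $0\le W_n(\pi^\star,x)\le 1$ and, as $0\le V'\le 1$, the second summand of $\zeta'_n$ lies in $[0,1]$, so $0\le\zeta'_n\le 2$. It then remains to show that $\EE^{\pi^\star}_x\bigl[\zeta'_n\bigr]\to\PP^{\pi^\star}_x(\tau<\tau',\tau<\infty)$.

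The first summand $W_n(\pi^\star,x) = \sum_{t=0}^{(n-1)\mn\tau\mn\tau'}\indic{O}(x_t)$ I would evaluate pathwise. Because $\tau$ is the first entry time to $O$ and $O\subset K$ forces $\tau\neq\tau'$ whenever either is finite, and because the summation index never exceeds $(n-1)\mn\tau\mn\tau'\le\tau$, the only index $t$ in the sum at which $x_t\in O$ is $t=\tau$, and it is included precisely when $\tau\le n-1$ and $\tau<\tau'$. Hence $W_n(\pi^\star,x) = \indic{\{\tau\le n-1,\;\tau<\tau'\}}$ — a nondecreasing, $[0,1]$-valued sequence with pointwise limit $\indic{\{\tau<\tau',\;\tau<\infty\}}$ — and bounded convergence yields $\EE^{\pi^\star}_x\bigl[W_n(\pi^\star,x)\bigr]\to\PP^{\pi^\star}_x(\tau<\tau',\tau<\infty)$.

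The remaining term $R_n\Let\indic{K\setmin O}(x_{(n-1)\mn\tau\mn\tau'})\,(\indic{K}\cdot V')(x_{n\mn\tau\mn\tau'})$ carries the only genuine (if modest) bookkeeping, and that is the step I expect to be the main obstacle. The idea is a pathwise case split: on $\{\tau\mn\tau'\le n-1\}$ we have $(n-1)\mn\tau\mn\tau' = \tau\mn\tau'$, and the state at that time lies in $O$ or in $X\setmin K$, hence not in $K\setmin O$, so $R_n = 0$; on $\{\tau\mn\tau'\ge n\}$ we have $x_{n-1}\in K\setmin O$ while $(n-1)\mn\tau\mn\tau' = n-1$ and $n\mn\tau\mn\tau' = n$, so $R_n = \indic{K}(x_n)V'(x_n)\in[0,1]$. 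Therefore $0\le R_n\le\indic{\{\tau\mn\tau'\ge n\}}$ pathwise; the point worth emphasizing is that $R_n$ vanishes once $\tau\mn\tau'$ has occurred, which is exactly what produces this dominating indicator. Since by hypothesis at least one of $\tau$, $\tau'$ is finite $\PP^{\pi^\star}_x$-a.s., we have $\tau\mn\tau'<\infty$ a.s., so $\PP^{\pi^\star}_x(\tau\mn\tau'\ge n)\to 0$ and hence $\EE^{\pi^\star}_x[R_n]\to 0$. Combining the three limits gives $V'(x) = \lim_{n\to\infty}\EE^{\pi^\star}_x\bigl[\zeta'_n\bigr] = \PP^{\pi^\star}_x(\tau<\tau',\tau<\infty)$, which is the assertion.
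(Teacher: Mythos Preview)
Your argument is correct. The paper's proof differs in one respect: rather than taking $n\to\infty$ and handling the two summands by bounded convergence, the paper observes that $(\zeta'_n)_{n\in\Nz}$ is a \emph{bounded} martingale and applies Doob's Optional Sampling Theorem at the stopping time $\tau\mn\tau'$ to obtain $V'(x) = \EE^{\pi^\star}_x[\zeta'_{\tau\mn\tau'}]$ in one stroke; it then evaluates $\zeta'_{\tau\mn\tau'}$ pathwise, noting that $W_{\tau\mn\tau'}(\pi^\star,x) = 0$, that $\indic{K\setmin O}(x_{\tau\mn\tau'-1}) = 1$ on the $\PP^{\pi^\star}_x$-full set $\{\tau\mn\tau'<\infty\}$, and that $(\indic{K}\cdot V')(x_{\tau\mn\tau'})$ equals $\indic{\{\tau<\tau'\}}$ by the boundary hypotheses on $V'$. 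Your route replaces the optional-sampling step by the constant-expectation identity $\EE^{\pi^\star}_x[\zeta'_n]=V'(x)$ plus a direct limit argument; the pathwise bookkeeping you carry out for $W_n$ and $R_n$ is essentially the same computation the paper performs at the stopped time, just unrolled along $n$. Your version is marginally more elementary in that it avoids quoting optional sampling, while the paper's is a bit shorter once that theorem is invoked; both rely on exactly the same two ingredients, namely boundedness of $(\zeta'_n)$ and the hypothesis that $\tau\mn\tau'<\infty$ almost surely.
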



		Proofs of the above results are presented in~\secref{s:martproofs}.

	\section{Discussion and Examples}
		\label{s:disc}
		Let us look at the stopped process $(x_{t\mn{(n-1)}\mn\tau\mn\tau'})_{t\in\Nz}$. It is clear that in this case $V_n(\pi, x) = 1$ whenever $x\in O$ and $V_n(\pi, x) = 0$ whenever $x\in X\setmin K$ for all policies in $\Pi_M$; otherwise for $x\in K\setmin O$ we have
		\begin{align*}
			V_n(\pi, x) & \Let  \PP^\pi_x\bigl(\tau < \tau', \tau < n\bigr)\\
				& = \PP^\pi_x\bigl(x_{1\mn\tau\mn\tau'}\in O\bigr) + \PP^\pi_x\bigl(x_{1\mn\tau\mn\tau'}\in K\setmin O, x_{2\mn\tau\mn\tau'}\in O\bigr)\\
				& \quad + \ldots + \PP^\pi_x\bigl(x_{1\mn\tau\mn\tau'}, \ldots, x_{(n-2)\mn\tau\mn\tau'}\in K\setmin O, x_{(n-1)\mn\tau\mn\tau'}\in O\bigr).
		\end{align*}
		Since the $k$-th term on the right-hand side is 
		$\EE^\pi_x\bigl[\prod_{t=1}^{k-1}\indic{K\setmin O}(x_{t\mn\tau\mn\tau'})\indic{O}(x_{k\mn\tau\mn\tau'})\bigr]$, it follows that
		\begin{align*}
				V_n(\pi, x) & = \EE^\pi_x\Biggl[\sum_{t=1}^{(n-1)\mn\tau\mn\tau'} \Biggl(\prod_{i=0}^{t-1}\indic{K\setmin O}(x_i)\Biggr)\indic{O}(x_t)\Biggr]\\
				& = \EE^\pi_x\Biggl[\sum_{t=1}^{(n-1)\mn\tau} \indic{O}(x_{t\mn\tau'})\Biggr] = \EE^\pi_x\Biggl[\sum_{t=1}^{(n-1)\mn\tau\mn\tau'} \indic{O}(x_{t})\Biggr].
		\end{align*}
		We note that $V_n(\pi, x) = 0$ whenever $x\in O\cup(X\setmin K)$. A policy that maximizes $V_n(\pi, x)$ is defined only on the set $K\setmin O$, and it is left undefined elsewhere. Once the process exits $K\setmin O$ or the stage reaches $n-1$, the task of our control policy is over. Such a deterministic stationary policy (which exists, as demonstrated below) with a measurable selector $f\in\mathbb F$ should be represented as $f^{\tau\mn\tau'} \Let  \underset{\tau\mn\tau'\text{ times}}{\underbrace{(f, f, \ldots, f)}}$ since it is applied only for the first $\tau\mn\tau'$ stages; however, for notational brevity we simply write $f^\infty$ hereafter.


		Quite clearly, letting $n\to\infty$, the monotone convergence theorem gives
		\begin{align*}
			V(\pi, x) & = \lim_{n\to\infty} V_n(\pi, x) = \PP^\pi_x\bigl(\tau < \tau', \tau < \infty\bigr)\\
			& = \EE^\pi_x\Biggl[\sum_{t=1}^{\tau\mn\tau'}\indic{O}(x_t)\Biggr] = \EE^\pi_x\Biggl[\sum_{t=1}^{\tau}\indic{O}(x_{t\mn\tau\mn\tau'})\Biggr].
		\end{align*}
		We note that by definition, the random sum inside the expectation on the right-hand side of the last equality above is the limit of partial (finite) sums, and this ensures that the term inside the expectation is defined on the event $\{\tau\mn\tau' < \infty\}$. By definition note that
		\begin{equation}
		\label{e:Vstar}
			V^\star(x) = \sup_{\pi\in\Pi_M} V(\pi, x) = \sup_{\pi\in\Pi_M}\EE^\pi_x\Biggl[\sum_{t=1}^{\tau\mn\tau'}\indic{O}(x_t)\Biggr].
		\end{equation}

		Consider again the value-iteration functions defined by
		\begin{equation}
		\label{e:valueiter}
			\begin{cases}
				v_0(x) \Let  \indic{O}(x)\\
				v_n(x) \Let  \displaystyle{\indic{O}(x) + \indic{K\setmin O}(x)\max_{a\in A(x)}\int_X Q(\mrm dy|x, a)\indic{K}(y) v_{n-1}(y)}
			\end{cases}
		\end{equation}
		for $x\in X$ and $n\in\N$. The function $v_n$ is clearly identifiable with the optimal value function for the problem of maximizing $\PP^\pi_x\bigl(\tau < \tau', \tau < n\bigr)$ of the process stopped at the $(n-1)$-th stage, $n\in\N$. To get an intuitive idea, fix a deterministic Markov policy $\pi' = (f_t)_{t\in\Nz}$ and take the first iterate $v_0$. (Of course the assumption underlying the notation $(f_t)_{t\in\Nz}$ is that $f_t$ is defined on $\{t < \tau\mn\tau'\}$.) It is immediately clear that the reward at the first step is $1$ if and only if $x\in K$ and $0$ otherwise, and that is precisely $v_0$ irrespective of the policy. For the second iterate note the reward under the policy $\pi'$ is $\indic{O}(x) + \indic{K\setmin O}(x)Q(O|x, f_0(x))$. This is because the reward is $1$ if $x\in O$ and the process terminates at the first stage, or $x\in K\setmin O$ and the reward at the second stage is the probability of hitting $O$ at the second stage. Of course there is no reward if $x\in X\setmin K$. Similarly, for the third iterate the reward is $\indic{O}(x) + \indic{K\setmin O}(x)\int_{K}Q(\mrm d\xi_1|x, f_0(x))\bigl(\indic{O}(\xi_1) + \indic{K\setmin O}(\xi_1)Q(O|\xi_1, f_1(\xi_1))\bigr)$. Note that only those sample paths that stay in $K\setmin O$ at the first step contribute to the reward at the second stage, only those sample paths that stay in $K\setmin O$ for the first and the second stages contribute to the reward at the third stage, and so on.

		\subsection{A general setting and various special cases} 
		\label{s:disc:ss:gensetting}
		Our problem~\eqref{e:problem} can be viewed as a special case of a more general setting. To wit, consider a nonnegative upper semicontinuous reward-per-stage function $r:\mathbb K\lra\posR$ and the problem of maximizing the total reward up to (and including) the hitting time $\tau\mn\tau'$, i.e., maximize $\EE^\pi_x\bigl[\sum_{t=0}^{\tau\mn\tau'} r(x_t, a_t)\bigr]$ over a class of policies. This corresponds to maximization of the reward until exit from the set $K\setmin O$. The value-iteration functions $(v'_n)_{n\in\Nz}$ corresponding to this problem can be written down readily: for $x\in X$ and $n\in\N$ let
		\begin{align*}
			v'_0(x) & \Let  \sup_{a\in A(x)}r(x, a)\indic{O\cup(X\setmin K)}(x),\\
			v'_n(x) & \Let  \sup_{a\in A(x)}\biggl[r(x, a)\indic{O\cup(X\setmin K)}(x) + \indic{K\setmin O}(x)\int_X Q(\mrm dy|x, a)v'_{n-1}(y)\biggr].
		\end{align*}
		Our problem~\eqref{e:problem} corresponds to the case of $r(x, a) = \indic{O}(x)$. Modulo the additional technical complications involving integrability of the value-iteration functions at each stage and the total reward corresponding to initial conditions being well-defined real numbers, the analysis of this more general problem can be carried out in exactly the same way as we do below for the problem~\eqref{e:problem}. While the above more general problem treats both the target set $O$ and the cemetery state $X\setmin K$ equally, the bias towards the target set $O$ is provided in our problem~\eqref{e:problem} by the special structure of the reward $r(x, a) = \indic{O}(x)$.

		From the general framework it is not difficult to arrive at reward-per-stage functions that are meaningful in the context of reachability, avoidance, and safety. For the sake of simplicity, till the end of this subsubsection we suppose that for all initial conditions and admissible policies $\pi\in\Pi$ the stopping times $\tau$ and $\tau'$ are finite $\PP^\pi_x$-almost surely. With this assumption in place, let us look at some examples:

		\begin{itemize}[label=$\circ$, leftmargin=*]

			\item Consider a discounted version of our problem~\eqref{e:problem}, namely, let 
			\[
				V^{(1)}(\pi, x) \Let  \EE^\pi_x\Biggl[\sum_{t=0}^{\tau\mn\tau'} \alpha^t \indic{O}(x_t)\Biggr],
			\]
			where $\alpha\in\;]0, 1[$ is a constant discount factor. From the definitions of $\tau$ and $\tau'$ it follows that $\sum_{t=0}^{\tau\mn\tau'} \alpha^t \indic{O}(x_t) = \alpha^\tau \indic{\{\tau < \tau'\}}$, and in view of the range of $\alpha$ it follows that maximization of $V^{(1)}$ over admissible policies leads to small values of $\tau$ on the set $\{\tau < \tau'\}$ on an average, but it is silent about the values of $\tau$ on $\{\tau > \tau'\}$.
			
			To get a more quantitative idea of the role that the discount factor $\alpha$ plays, let $\tilde\tau$ be a random variable independent of the Markov control process defined in Definition~\ref{d:mcp},\footnote{The random variable $\tilde\tau$ can be defined in a standard way by enlarging the probability space.} with distribution function $\PP(\tilde\tau = n) = (1-\alpha)\alpha^n$ for all $n\in\Nz$. In a standard way we construct the product probability measure $\PP^\pi\otimes\PP$ and denote the expectation with respect to this measure as $\EE^{\pi, \tilde\tau}_x[\cdot]$. We can write 
			\[
				V^{(1)}(\pi, x) = \EE^\pi_x\Biggl[\sum_{t=0}^\infty\alpha^t\indic{O}(x_t)\indic{\{t \le \tau\mn\tau'\}}\Biggr] = (1-\alpha)^{-1}\EE^{\pi, \tilde\tau}_x\bigl[\indic{O}(x_{\tilde\tau})\indic{\{\tilde\tau \le \tau\mn\tau'\}}\bigr].
			\]
			In view of the definitions of $\tau$ and $\tau'$ we get $V^{(1)}(\pi, x) = (1-\alpha)^{-1} \EE^{\pi, \tilde\tau}_x\bigl[\indic{\{\tilde\tau = \tau, \tau < \tau'\}}\bigr]$. This alternative characterization shows that maximization of $V^{(1)}$ over admissible policies leads to smaller values of $\tau$ compared to $\tau'$; moreover, the random variable $\tilde\tau$ gives a quantitative idea of how the choice of $\alpha$ determines the outcome since $\tilde\tau$ is a geometric random variable with parameter $(1-\alpha)$. Choosing a small $\alpha$ implies smaller $\tilde\tau$ with higher probability and may appear to be profitable; however, in certain problems it is possible that the set $O$ may be reachable at $\tilde\tau$ with small probability and the corresponding event of interest $\{\tilde\tau = \tau, \tau < \tau'\}$ may be relatively small for a given initial condition $x$. Moreover, the factor $(1-\alpha)^{-1}$ is small for small values of $\alpha$, and contributes to this phenomenon.

			A second quantitative view of the role of $\alpha$ is offered by the fact that $V^{(1)}(\pi, x) = \EE^{\pi, \tilde\tau}_x\bigl[\sum_{t=0}^{\tilde\tau\mn\tau\mn\tau'}\indic{O}(x_t)\bigr]$. Indeed, we have
			\begin{align*}
				\EE^{\pi, \tilde\tau}_x &\Biggl[\sum_{t=0}^{\tilde\tau\mn\tau\mn\tau'}\indic{O}(x_t)\Biggr] = \EE^{\pi, \tilde\tau}_x\Biggl[\sum_{t=0}^{\tilde\tau}\indic{O}(x_t)\indic{\{t\le\tau\mn\tau'\}}\Biggr]\\
				& = \EE^\pi_x\Biggl[\sum_{n=0}^\infty \alpha^n(1-\alpha)\sum_{t=0}^n \indic{O}(x_t)\indic{\{t\le\tau\mn\tau'\}}\Biggr]\\
				& = \EE^\pi_x\Biggl[\sum_{n=0}^\infty \sum_{t=0}^n \alpha^n\indic{O}(x_t)\indic{\{t\le\tau\mn\tau'\}} - \sum_{n=0}^\infty\sum_{t=0}^n\alpha^{n+1}\indic{O}(x_t)\indic{\{t\le\tau\mn\tau'\}}\Biggr]\\
				& = \EE^\pi_x\Biggl[\sum_{t=0}^\infty \sum_{n=t}^\infty \alpha^n\indic{O}(x_t)\indic{\{t\le\tau\mn\tau'\}} - \sum_{t=0}^\infty\sum_{n=t}^\infty\alpha^{n+1}\indic{O}(x_t)\indic{\{t\le\tau\mn\tau'\}}\Biggr]\\
				& = \EE^\pi_x\Biggl[\sum_{t=0}^\infty \frac{\alpha^t}{1-\alpha}\indic{O}(x_t)\indic{\{t\le\tau\mn\tau'\}} - \sum_{t=0}^\infty\frac{\alpha^{t+1}}{1-\alpha}\indic{O}(x_t)\indic{\{t\le \tau\mn\tau'\}}\Biggr]\\
				& = \EE^\pi_x\Biggl[\sum_{t=0}^\infty\alpha^t\indic{O}(x_t)\indic{\{t\le \tau\mn\tau'\}}\Biggr] = \EE^\pi_x\Biggl[\sum_{t=0}^{\tau\mn\tau'}\alpha^t\indic{O}(x_t)\Biggr] = V^{(1)}(\pi, x).
			\end{align*}
			In this setting we do not have the $(1-\alpha)^{-1}$ factor outside the expectation as in the second version of $V^{(1)}$ above, and it demonstrates that maximizing $V^{(1)}(\pi, x)$ over admissible policies leads to maximizing the probability of the event $\{\tau < \tilde\tau\mn\tau'\}$, where $\alpha$ controls the values of $\tilde\tau$ as before.

			\item Consider the reward-per-stage function $r(x, a) = \indic{O}(x) - \indic{X\setmin O}(x)$. Under integrability assumption on $\tau\mn\tau'$ under all admissible policies, we have 
			\begin{align*}
				V^{(2)}(\pi, x) & \Let  \EE^\pi_x\Biggl[\sum_{t=0}^{\tau\mn\tau'}\bigl(\indic{O}(x_t) - \indic{X\setmin O}(x_t)\bigr)\Biggr]\\
				& = \EE^\pi_x\Biggl[\sum_{t=0}^{\tau\mn\tau'}\bigl(\indic{O}(x_t) - \indic{K\setmin O}(x_t) - \indic{X\setmin K}(x_t)\bigr)\Biggr]\\
				& = \PP^\pi_x(\tau < \tau') - \PP^\pi_x(\tau' < \tau) - \EE^\pi_x[\tau\mn\tau'].
			\end{align*}
			Clearly, maximization of $V^{(2)}$ over admissible policies leads to both the maximal enlargement of the set $\{\tau < \tau'\}$ and minimization of the hitting time $\tau$ on this set.

			\item Consider $r(x, a) = \indic{O}(x) - \indic{X\setmin K}(x)$. This leads to the expected total reward until escape from $K\setmin O$ as\label{page:V3}
			\[
				V^{(3)}(\pi, x) \Let  \EE^\pi_x\Biggl[\sum_{t=0}^{\tau\mn\tau'}\bigl(\indic{O}(x_t) - \indic{X\setmin K}(x_t)\bigr)\Biggr] = \PP^\pi_x(\tau < \tau') - \PP^\pi_x(\tau' < \tau).
			\]
			Since $\PP^\pi_x(\tau < \tau') + \PP^\pi_x(\tau' < \tau) = 1$, maximization of $V^{(3)}$ over admissible policies maximizes the probability of the event $\{\tau < \tau'\}$. Thus, maximizing $V^{(3)}(\pi, x)$ over $\pi\in\Pi$ is a different formulation of the objective of our problem~\eqref{e:problem}. The above analysis also shows that the same objective results if we take the reward-per-stage function to be $\indic{O}(x) - \gamma\indic{X\setmin K}(x)$ for any $\gamma\ge 0$.

			\item Suppose that $\tau\mn\tau'$ is integrable for all admissible policies and consider the reward-per-stage $r(x, a) = \indic{K\setmin O}(x)$. Let 
			\[
				V^{(4)}(\pi, x) \Let  \EE^\pi_x\Biggl[\sum_{t=0}^{\tau\mn\tau'} \indic{K\setmin O}(x_t)\Biggr].
			\]
			Maximization of $V^{(4)}$ over admissible policies leads to large values of $\tau\mn\tau'$ on an average. This is a form of safety problem, the state stays inside $K\setmin O$ for as long as possible on an average.

			\item Suppose that $\tau\mn\tau'$ is integrable for all admissible policies and consider $r(x, a) = \gamma\indic{O}(x) - \indic{K\setmin O}(x)$ for $\gamma \ge 1$. Consider
			\[
				V^{(5)}(\pi, x) \Let  \EE^\pi_x\Biggl[\sum_{t=0}^{\tau\mn\tau'} \bigl(\gamma\indic{O}(x_t) - \indic{K\setmin O}(x_t)\bigr)\Biggr],
			\]
			we see that $V^{(5)}(\pi, x) = \gamma\PP^\pi_x(\tau < \tau') - \EE^\pi_x[\tau\mn\tau']$. We see that maximization of $V^{(5)}$ over admissible policies leads to a balance between maximizing the probability that the state hits the set $O$ before getting out of $K$ and exiting $K$ quickly. This is because it is more profitable to exit from $K$ and get a zero reward than incur negative reward by prolonging the duration of stay in $K\setmin O$. The factor $\gamma$ decides the priorities of the two alternatives. It is trivially clear that $\gamma = 1$ leads to rapid exit from $K$ if the initial condition is in $K\setmin O$.
		\end{itemize}

		Not all the reward-per-stage functions mentioned above can be handled in our present framework. In particular, we make the crucial assumption that the reward-per-stage function is nonnegative, which does not hold in some of the cases above. However, under appropriate growth-rate conditions on the reward-per-stage function, the nonnegativity assumption can be dispensed with.

		In classical finite or infinite-horizon optimal control problems a translation of the (fixed) reward-per-stage function would not change the solution to the problem. However, translations of the reward-per-stage function in random-horizon problems may lead to drastically different policies. We give two examples:
		\begin{itemize}[label=$\circ$, leftmargin=*]
			\item Consider the reward-per-stage functions $r'(x, a) = \indic{O}(x) - \indic{X\setmin K}(x)$ and $r''(x, a) = 2\cdot\indic{O}(x) + \indic{K\setmin O}(x)$; in this case we translate $r'$ on $X$ by $1$, i.e., $r'' = r' + 1$. On the one hand, maximizing $\EE^\pi_x\bigl[\sum_{t=0}^{\tau\mn\tau'} r'(x_t, a_t)\bigr]$ yields a policy that $\PP^\pi_x(\tau < \tau')$ as we have seen before (this is $V^{(3)}$ above). On the other hand, maximizing $\EE^\pi_x\bigl[\sum_{t=0}^{\tau\mn\tau'} r''(x_t, a_t)\bigr]$ yields a policy that tries to keep the state in $K\setmin O$ for as long as possible, and at each stage accrue a reward of $1$, which is certainly better than jumping to $O$ and accruing a reward of $2$ at most.
			\item Consider $r'(x, a) = \indic{O}(x) - \indic{X\setmin K}(x)$ and $r''(x, a) = -\indic{O}(x) - 3\cdot\indic{X\setmin K}(x)$; in this case we translate $r'$ by $-2$ only on its support $O\cup(X\setmin K)$. We have noted above that maximizing $\EE^\pi_x\bigl[\sum_{t=0}^{\tau\mn\tau'}r'(x_t, a_t)\bigr]$ yields a policy that maximizes $\PP^\pi_x(\tau < \tau')$. However, maximizing $\EE^\pi_x\bigl[\sum_{t=0}^{\tau\mn\tau'}r''(x_t, a_t)\bigr]$ yields a policy that tries to keep the state in $K\setmin O$ for the longest possible duration to avoid incurring negative reward.
		\end{itemize}

		\subsection{Further examples}
		For one-dimensional stochastic processes initialized somewhere between two different levels $a$ and $b$, problems such as calculating the probability of hitting the level $a$ before the level $b$ are fairly common, e.g., in random walks, Brownian motion, and diffusions, see, e.g.,~\cite[Chapters~2-3]{ref:peresMC},~\citep{ref:revuzyorCMBM}. It is possible to obtain explicit expressions of these probabilities in a handful of cases.

		Let us consider a controlled Markov chain $(x_t)_{t\in\Nz}$ with a finite state-space $X = \{1, 2, \ldots, m\}$ and a transition probability matrix $Q = [q_{ij}(a)]_{m\times m}$, where $a$ is the action or control variable. Let $O\subsetneqq X$, $K\subsetneqq X$ be subsets of $X$ with $O\subsetneqq K$. Since $X$ is finite, Assumption~\ref{a:key} is satisfied. Consider the problem~\eqref{e:problem} in the context of this Markov chain $(x_t)_{t\in\Nz}$ initialized at some $i_0\in K\setmin O$. By Theorem~\ref{t:exist} the optimal value function $V^\star$ must satisfy the equation
		\begin{align*}
			V^\star(i) & = \indic{O}(i) + \indic{K\setmin O}(i)\max_{a\in A(i)}\sum_{j\in K} q_{ij}(a)V^\star(j)\\
			& = \indic{O}(i) + \indic{K\setmin O}(i)\max_{a\in A(i)}\Biggl(\sum_{j\in O}q_{ij}(a) + \sum_{j\in K\setmin O} q_{ij}(a)V^\star(j)\Biggr)
		\end{align*}
		for all $i\in X$. If the control actions are finite in number, searching for a maximizer over an enumerated list all control actions corresponding to each of the states may be possible if the state and action spaces are not too large. However, the memory requirement for storing such enumerated lists clearly increases exponentially with the dimension of the state and action spaces if the Markov chain is extracted by a discretization procedure based on a grid on the state-space of a discrete-time Markov process evolving, for example, on a subset of Euclidean space. As an alternative, it is possible to search for a maximizer from a parametrized family of functions (vectors) by applying well-known suboptimal control strategies~\cite[Chapter~6]{ref:bertsekasDP2}, \citep{ref:bertsekasNDP, ref:powellADP}. Note that in the case of an uncontrolled Markov chain the equation above reduces to $V^\star(i) = \indic{O}(i) + \indic{K\setmin O}(i)\bigl(\sum_{j\in O} q_{ij} + \sum_{j\in K\setmin O} q_{ij}V^\star(j)\bigr)$, and can be solved as a linear equation on $K\setmin O$ for the vector $V^\star|_{K\setmin O}$. Thus, solving for $V^\star$ yields a method of calculating the probability of hitting $O$ before hitting $X\setmin K$ in uncontrolled Markov chains, and can serve as a verification tool~\citep{ref:kwiatkowskaSMC}.

		In certain cases of uncountable state-space Markov chains the policies and value functions corresponding to maximization of $\PP^\pi_x\bigl(\tau < \tau', \tau < n\bigr)$ can be explicitly calculated for small values of $n$. As an illustration, consider a scalar linear controlled system
		\begin{equation}
		\label{e:linsys}
			x_{t+1} = x_t + a_t + w_t, \quad x_0 = x,\;\; t\in\Nz.
		\end{equation}
		Here $x_t\in\R$ is the state of the system at time $t$, $a_t$ is the action or control at time $t$ taking values in $[-1, 1]$, and $(w_t)_{t\in\Nz}$ is a sequence of independent and identically distributed (i.i.d) standard normal random variables treated as noise inputs to the system. Let us suppose that our target set is $O =\; ]-1, 1[$, safe set is $K = [-3, 3]$, and let us find a greedy policy for our problem, i.e., a policy that maximizes $\PP^\pi_x\bigl(\tau < \tau', \tau < 2\bigr)$. 
		\begin{wrapfigure}{l}[0mm]{0.5\textwidth}
		\begin{center}
			\includegraphics[width=0.48\textwidth]{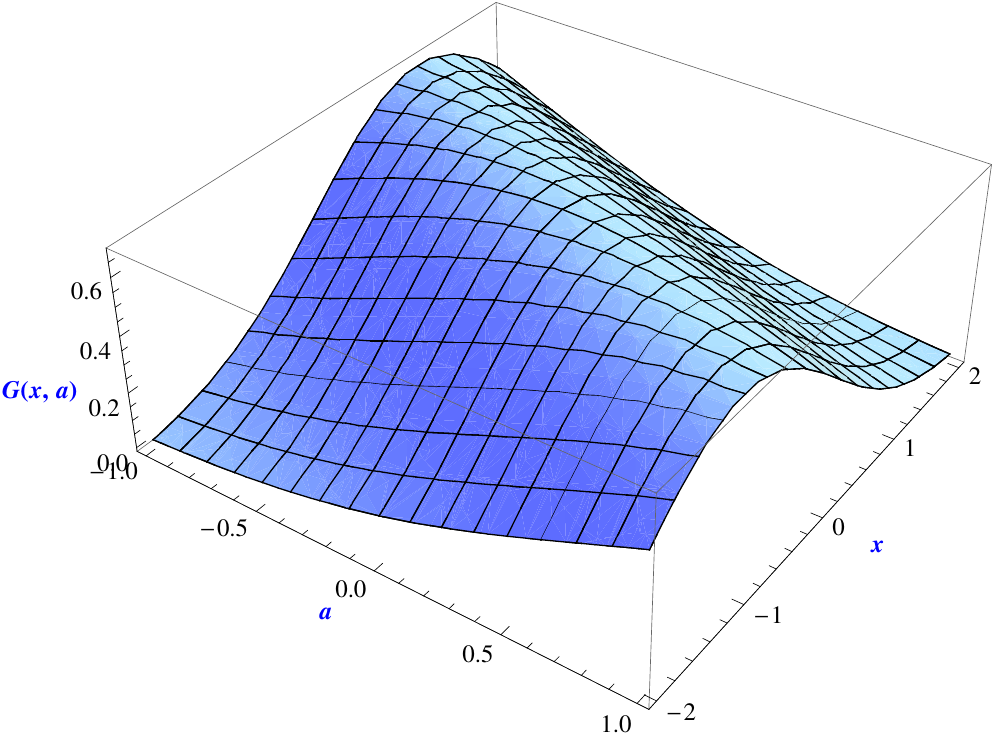}
			\vspace*{-0.2in}
		\end{center}
		\end{wrapfigure}
		The greedy policy tries to maximize $\PP_x\bigl(x_1\in\;]-1, 1[\bigr) = \PP_x\bigl(x + a + w\in\;]-1, 1[\bigr) = \mathfrak N(1-x-a) -\mathfrak N(-1-x-a) =: G(x, a)$, where $\mathfrak N$ is the cumulative distribution function of the standard normal random variable. The function $G$ can be expressed in terms of the complementary error function\footnote{Recall that the complementary error function is defined as $\erfc(r) \Let  \frac{2}{\sqrt \pi}\int_r^\infty \epower{-t^2}\mrm dt = 1-\erf(r)$, where $\erf(\cdot)$ is the standard error function.} as $G(x, a) = \frac{1}{2}\Bigl(\erfc\bigl(-\frac{1}{\sqrt 2}(1-x-a)\bigr) - \erfc\bigl(-\frac{1}{\sqrt 2}(-1-x-a)\bigr)\Bigr)$, and $\argmax_{a\in [-1, 1]}G(x, a)$ can be solved in closed form. Indeed, $\tfrac{\partial G}{\partial a}(x, a) = \frac{1}{\sqrt{2\pi}}\bigl(\epower{-\frac{1}{2}(x+a+1)^2} - \epower{-\frac{1}{2}(x+a-1)^2}\bigr) = 0$ gives $a^\star = f_\star(x) = -x$ as the unconstrained optimizer. Since $a\in[-1, 1]$, we have the constrained maximizer as $f_\star(x) = -\sat(x)$, where $\sat(\cdot)$ is the standard saturation function.\footnote{Recall that the standard saturation function is defined as $\sat(r)$ equals $r$ if $\abs{r} < 1$, $1$ if $r \ge 1$ and $-1$ otherwise.} In other words, we get a bang-bang controller since $x-\sat(x)\neq 0$ on the interior of $K\setmin O$. It is easy to discern the maximizer from the accompanying figure. The corresponding maximal probability is found by substituting the above optimizer back into the dynamic programming equation, and this yields $V_1^\star(x) = \indic{O}(x) + \frac{1}{2}\indic{K\setmin O}(x)\Bigl(\erf\bigl(\frac{1}{\sqrt 2}(x-\sat(x)+1)\bigr) - \erf\bigl(\frac{1}{\sqrt 2}(x-\sat(x) - 1)\bigr)\Bigr)$. For $n = 3$ it turns out that we can no longer compute the optimizer corresponding to the first stage in closed form; the optimizer for the second stage is, of course, $f_\star(x) = -\sat(x)$ calculated above. It is also evident from the accompanying figure that even in this simple example there will arise nontrivial issues with nonconvexity for $n \ge 3$.

		\subsection{Uniqueness of optimal policies}
		So far in our discussion we have not addressed the issue of uniqueness of the optimal policy in our problem~\eqref{e:problem}. (Theorem~\ref{t:exist} shows that an optimal policy exists, so the uniqueness question is meaningful.) It becomes clear from considerations of the geometry of the sets $O$ and $K$ in simple examples that the optimal controller $f_\star$ in Theorem~\ref{t:exist}\emph{\ref{co:exist:2}} is nonunique in general. For instance, consider the linear system considered in~\eqref{e:linsys} above with initial condition $x_0 = 0$, and let $O = \;]-2, -1[\;\cup\;]1, 2[$ and $K = [-3, 3]$. Since the noise is symmetric about the origin, from symmetry considerations it immediately follows that the optimal controller $f_\star$ is nonunique at the origin. Note that $f_\star$ is, of course, defined on $K\setmin O$.

		\subsection{Relation to a probabilistic safety problem}
		Let us digress a little and consider the following probabilistic safety problem: maximize the probability that the state remains inside a safe set $C\subset X$ for $n$ stages, beginning from an initial condition $x\in C$. This, as mentioned earlier, is the probabilistic safety problem addressed in~\citep{ref:prandiniPSafety}. Of course the probability of staying inside $C$ for the first $n$ stages is given by $\PP^\pi_x\bigl(\bigcap_{t=0}^{n-1}\{x_t\in C\}\bigr) = \EE^\pi_x\bigl[\prod_{t=0}^{n-1}\indic{\{x_t\in C\}}\bigr]$. If $\sigma$ is the first exit time from $C$, then $\PP^\pi_x\bigl(\bigcap_{t=0}^{n-1}\{x_t\in C\}\bigr) = \EE^\pi_x\bigl[\prod_{t=0}^{(\sigma-1)\mn(n-1)}\indic{\{x_t\in C\}}\bigr]$. Therefore, in this particular problem there is no difference between the maximal values of $\EE^\pi_x\bigl[\prod_{t=0}^{n-1}\indic{\{x_t\in C\}}\bigr]$ or $\EE^\pi_x\bigl[\prod_{t=0}^{(\sigma-1)\mn(n-1)}\indic{\{x_t\in C\}}\bigr]$. However, the policies arising from the two different maximizations are quite unlike each other. Indeed, whereas the former yields a deterministic Markov policy~\citep{ref:prandiniPSafety} whose every element is defined on all of $X$, the stopping time version yields a deterministic Markov policy whose $t$-th element $\pi_t$ is defined on the set $\{t < \sigma\mn n\}$, just as discussed in paragraph~\ref{pgr:policies}. On the one hand note that the reward in the former case is not affected by further application of the control actions once the state has exited the safe set $C$; the policy resulting from this formulation, however, dictates that the control actions are carried out until (and including) the $(n-2)$-th stage nonetheless. On the other hand, the reward in the latter stopping time version saturates at the stage the state leaves $C$ and future control actions are not defined.

 		It is interesting to note that the Bellman equation developed for probabilistic safety and reachability in~\citep{ref:prandiniPSafety} may be obtained as a special case of~\eqref{e:bellmaneqn} in Theorem~\ref{t:exist} above. This comes as no surprise. The problem of maximizing the probability of staying inside a (measurable) safe set $C\subset X$ for $N$ steps is given by the maximization of $\EE^\pi_x\bigl[\prod_{t=0}^{\sigma\mn(N-1)}\indic{C}(x_t)\bigr]$, where $\sigma$ is the first time to exit $C$ and this clearly translates to minimizing $\PP^\pi_x(\tau < N)$. In our setting, if we let $K$ be the entire state-space $X$, $C = X\setmin O$, and $\tau$ the first time to hit the set $O$, then our problem~\eqref{e:problem} is precisely that in~\citep{ref:prandiniPSafety} with the exception of maximization in place of minimization. It must be mentioned however, that the analysis carried out in~\citep{ref:prandiniPSafety} relies on the approach in~\citep{ref:bertsekasshreve78} and is purely analytical; the strong Feller assumption on the transition kernel in our formulation plays no role there.

	\section{Proofs}
	\label{s:proof}
		This section collects the proofs of the various results in~\secref{s:results}.

		\subsection{Proof of Theorem~\eqref{t:exist}}
		\label{s:mainproof}
		We recall a few standard results about set-valued maps first, followed by sequence of lemmas before getting to the proof of Theorem~\ref{t:exist}. The various definitions in paragraphs~\ref{pgr:policies}, \ref{pgr:recalldef}, and~\ref{pgr:altrep} will be employed without further reference. Just as in~\secref{s:mainres}, for the purposes of this subsection, we let $\Pi_M$ denote the set of admissible Markov policies such that $\pi_t$ is defined on $\mathbb K$ whenever $(\pi_t)_{t\in\Nz}\in\Pi_M$.

		\begin{proposition}[{\cite[Lemma~17.30]{ref:aliprantisIDA}}]
		\label{p:uhccorr}
			Let $\Psi:X\corresp Y$ be an upper hemicontinuous set-valued map between topological spaces with nonempty compact values, and let $f:\mathop{Graph}(\Psi)\lra\R$ be upper semicontinuous.\footnote{Recall that $\mathop{Graph}(\Psi)$ is the set $\bigl\{(x, \Psi(x))\,\big|\, x\in X\bigr\}\subset X\times Y$, the graph of the set-valued map $\Psi$.} Define the function $m:X\lra\R$ by $m(x) \Let  \max_{y\in\Psi(x)} f(x, y)$. Then the function $m$ is upper semicontinuous.
		\end{proposition}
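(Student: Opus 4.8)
The plan is first to check that $m$ is well defined and real-valued, and then to deduce its upper semicontinuity directly from the $\eps$-formulation, combining a tube-lemma argument with the upper hemicontinuity of $\Psi$.

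For well-definedness: for each fixed $x$ the map $y\mapsto f(x,y)$ is upper semicontinuous on $\{x\}\times\Psi(x)$, being a restriction of the upper semicontinuous function $f$, and $\Psi(x)$ is nonempty and compact. A real-valued upper semicontinuous function on a nonempty compact set is bounded above and attains its supremum, so $m(x)=\max_{y\in\Psi(x)}f(x,y)$ is a well-defined element of $\R$ for every $x\in X$.

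For the upper semicontinuity proper, I would fix $x_0\in X$ and $\eps>0$ and produce a neighbourhood $V$ of $x_0$ on which $m<m(x_0)+\eps$. The key object is the sublevel set $U\Let\{(x,y)\in\mathop{Graph}(\Psi): f(x,y)<m(x_0)+\eps\}$, which is open in $\mathop{Graph}(\Psi)$ because $f$ is upper semicontinuous; write $U=W\cap\mathop{Graph}(\Psi)$ with $W$ open in $X\times Y$. Since $f(x_0,y)\le m(x_0)<m(x_0)+\eps$ for every $y\in\Psi(x_0)$, the slice $\{x_0\}\times\Psi(x_0)$ lies in $W$. Next I would run the tube construction: for each $y\in\Psi(x_0)$ pick basic open boxes $V_y\times G_y\subset W$ with $x_0\in V_y$, $y\in G_y$; extract a finite subcover $G_{y_1},\dots,G_{y_k}$ of $\Psi(x_0)$ by compactness, and set $G\Let\bigcup_i G_{y_i}$ (open, containing $\Psi(x_0)$) and $V_1\Let\bigcap_i V_{y_i}$ (an open neighbourhood of $x_0$), so that $V_1\times G\subset W$. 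Then upper hemicontinuity of $\Psi$ at $x_0$, applied to the neighbourhood $G$ of $\Psi(x_0)$, yields an open $V_2\ni x_0$ with $\Psi(x)\subset G$ for all $x\in V_2$. Taking $V\Let V_1\cap V_2$: for $x\in V$ and any $y\in\Psi(x)$ we have $(x,y)\in(V_1\times G)\cap\mathop{Graph}(\Psi)\subset U$, hence $f(x,y)<m(x_0)+\eps$; maximising over $y\in\Psi(x)$ gives $m(x)<m(x_0)+\eps$. Since $\eps>0$ was arbitrary, $\limsup_{x\to x_0}m(x)\le m(x_0)$, and since $x_0$ was arbitrary, $m$ is upper semicontinuous.

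The step I expect to be the main obstacle is the tube construction feeding into upper hemicontinuity: one needs a single open set $G$ that simultaneously (i) captures $\Psi(x_0)$ tightly enough that $V_1\times G$ stays inside $W$ for some neighbourhood $V_1$ of $x_0$, and (ii) qualifies as an admissible neighbourhood of $\Psi(x_0)$ in the definition of upper hemicontinuity, so that $\Psi(x)\subset G$ for all $x$ near $x_0$. Once these two roles of $G$ are reconciled the remainder is bookkeeping, and notably no metrizability or countability hypotheses on $X$ or $Y$ are required.
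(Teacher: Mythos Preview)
Your argument is correct. The paper does not supply its own proof of this proposition; it simply quotes it as Lemma~17.30 of Aliprantis and Border and uses it as a black box in the proof of Lemma~\ref{l:L0toL0}. So there is no in-paper proof to compare against.

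That said, your write-up is essentially the standard proof one finds in Aliprantis and Border: extend the open sublevel set of $f$ from $\mathop{Graph}(\Psi)$ to an ambient open $W\subset X\times Y$, run the tube construction over the compact slice $\{x_0\}\times\Psi(x_0)$ to produce a box $V_1\times G\subset W$, and then invoke upper hemicontinuity to force $\Psi(x)\subset G$ nearby. The only point worth a remark is your closing inequality: since the maximum $m(x)=f(x,y^\ast)$ is attained at some $y^\ast\in\Psi(x)$ by compactness and upper semicontinuity, the strict bound $f(x,y^\ast)<m(x_0)+\eps$ indeed transfers to $m(x)<m(x_0)+\eps$, and no passage to a non-strict inequality is needed. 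Your observation that no metrizability or countability assumptions are required is also on the mark, and matches the generality of the cited reference.
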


		\begin{proposition}[{\cite[Theorem~18.19]{ref:aliprantisIDA}}]
		\label{p:meascorr}
			Let $X$ be a separable metrizable space and $(S, \Sigma)$ a measurable space. Let $\Psi:S\corresp X$ be a weakly measurable correspondence with nonempty compact values, and suppose $f:S\times X\lra \R$ is a Carath\'eodory function.\footnote{Recall that a Carath\'eodory function $f:S\times X\lra Y$ is a mapping that is measurable in the first variable and continuous in the second, where $(S, \Sigma)$ is a measurable space and $X, Y$ are topological spaces. In particular, if $X$ is a separable and metrizable space, and $Y$ is a metrizable space, every Carath\'eodory function $f:S\times X\lra Y$ is jointly measurable~\cite[Lemma~4.51]{ref:aliprantisIDA}; this is clearly true in the Carath\'eodory functions we consider.} Let us also define the function $m:S\lra\R$ by $m(s) \Let  \max_{x\in\Psi(s)} f(s, x)$, and the correspondence $\mu:S\corresp X$ of maximizers by $\mu(s) \Let  \bigl\{x\in\Psi(s)\,\big|\,f(s, x) = m(s)\bigr\}$. Then the argmax correspondence $\mu$ is measurable and admits a measurable selector.
		\end{proposition}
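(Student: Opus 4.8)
The plan is to prove the statement in four stages: a reduction together with the elementary structure of $\mu$, measurability of the value function $m$, weak measurability of $\mu$ as a correspondence, and finally the extraction of a measurable selector. At the outset I would observe that, because every value $\Psi(s)$ is compact, we may embed $X$ into its completion $\wh X$, a Polish space, and carry out the whole argument there: each correspondence that arises ($\Psi$, $\mu$, and $\Psi(\cdot)\cap F$ for closed $F$) has compact values, which remain closed in $\wh X$, and any selector produced will automatically take values in $X$. For the elementary structure, fix $s$: since $f(s,\cdot)$ is continuous on the compact set $\Psi(s)$ it attains its maximum, so $m(s)\Let\max_{x\in\Psi(s)} f(s,x)$ is a finite real number and $\mu(s)$ is nonempty; moreover $\mu(s)=\{x\in\Psi(s): f(s,x)=m(s)\}$ is the intersection of the compact set $\Psi(s)$ with the closed set $\{x: f(s,x)=m(s)\}$, hence compact.

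To prove that $m$ is measurable I would invoke a Castaing representation of $\Psi$: since $\Psi$ is weakly measurable with nonempty closed values into the Polish space $\wh X$, there is a countable family $(g_n)_{n\in\N}$ of measurable selectors of $\Psi$ with $\Psi(s)=\cl\{g_n(s):n\in\N\}$ for every $s$. As noted in the footnote to the statement, every Carath\'eodory function into a metrizable space is jointly measurable, so each map $s\mapsto f(s,g_n(s))$ is measurable, being the composition of the measurable map $s\mapsto(s,g_n(s))$ with $f$. Density of $\{g_n(s)\}$ in $\Psi(s)$ together with continuity of $f(s,\cdot)$ gives $m(s)=\sup_{n\in\N} f(s,g_n(s))$, a countable supremum of measurable functions; hence $m$ is measurable.

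The hard part will be showing that $\mu$ is weakly measurable, i.e.\ that $\mu^\ell(U)\Let\{s:\mu(s)\cap U\neq\emptyset\}\in\Sigma$ for every open $U\subset X$. The obstruction is that the obvious description of $\mu^\ell(U)$ as a projection of the jointly measurable set $\{(s,x): x\in\Psi(s),\,f(s,x)=m(s)\}\cap(S\times U)$ need not be measurable over an arbitrary measurable space $(S,\Sigma)$, so the projection theorem is unavailable. The device I would use is that every open set in a metric space is $F_\sigma$, so I write $U=\bigcup_{k\in\N} F_k$ with each $F_k$ closed. Then $\mu(s)\cap U\neq\emptyset$ iff $\mu(s)\cap F_k\neq\emptyset$ for some $k$, and since $\Psi(s)\cap F_k$ is compact a global maximizer of $f(s,\cdot)$ lies in $F_k$ exactly when $\Psi(s)\cap F_k\neq\emptyset$ and $m_{F_k}(s)\Let\max_{x\in\Psi(s)\cap F_k} f(s,x)=m(s)$. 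The correspondence $s\mapsto\Psi(s)\cap F_k$ is measurable with compact values, so applying the previous paragraph to it shows $m_{F_k}$ is measurable on $\Psi^\ell(F_k)$; and because a compact-valued weakly measurable correspondence also has measurable lower inverses of closed sets, $\Psi^\ell(F_k)\in\Sigma$. Therefore $\mu^\ell(U)=\bigcup_{k\in\N}\bigl(\Psi^\ell(F_k)\cap\{s: m_{F_k}(s)=m(s)\}\bigr)\in\Sigma$, as required.

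Finally, $\mu$ is a weakly measurable correspondence with nonempty closed values into the Polish space $\wh X$, so the Kuratowski--Ryll-Nardzewski measurable selection theorem furnishes a measurable selector $g:S\to\wh X$. Since $g(s)\in\mu(s)\subset X$ for every $s$, this $g$ is the desired measurable selector of $\mu$ taking values in $X$, which completes the proof.
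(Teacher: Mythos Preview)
The paper does not supply its own proof of this proposition: it is quoted verbatim as \cite[Theorem~18.19]{ref:aliprantisIDA} and invoked as a black box in the proof of Lemma~\ref{l:L0toL0}. Your argument is correct and is essentially the standard proof one finds in that reference---measurability of $m$ via a Castaing representation, weak measurability of $\mu$ through the $F_\sigma$ decomposition of open sets combined with the restricted maxima $m_{F_k}$, and finally an appeal to the Kuratowski--Ryll-Nardzewski selection theorem. The only points worth tightening are routine: the claim that $s\mapsto\Psi(s)\cap F_k$ is (weakly) measurable relies on the equivalence of weak measurability and measurability for compact-valued correspondences into metrizable spaces (e.g.\ \cite[Theorem~18.10]{ref:aliprantisIDA}), and the Castaing representation on $\Psi^\ell(F_k)$ should be applied to the trace $\sigma$-algebra there; both are standard and cause no difficulty.
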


		\begin{defn}
		{\rm 
			For $u\in b\Borelsigalg{X}^+\cap\bar B$ we define the mapping $Tu$%
			\begin{equation}
			\label{e:dpoperator}
				X\ni x\mapsto Tu(x) \Let  \indic{O}(x) + \indic{K\setmin O}(x)\sup_{a\in A(x)}\int_{K} Q(\mrm dy|x, a) u(y)\in\posR.
			\end{equation}
			The operator $T$ is called the \emph{dynamic programming operator} corresponding to the problem~\eqref{e:problem}.
		}\DefEnd
		\end{defn}

		\begin{lemma}
		\label{l:L0toL0}
			Suppose that Assumption~\eqref{a:key} holds. Then the dynamic programming operator $T$ defined in~\eqref{e:dpoperator} takes $b\Borelsigalg{X}^+\cap\bar B$ into itself. Moreover, there exists a measurable selector $f\in\mathbb F$ such that
			\begin{equation}
			\label{e:finT}
				Tu(x) = \indic{O}(x) + \indic{K\setmin O}(x)\int_{K} Q(\mrm dy|x, f) u(y)\quad \fa x\in X.
			\end{equation}
		\end{lemma}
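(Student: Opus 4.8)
The plan is to deduce both assertions from the measurable maximum theorem, after using the strong Feller hypothesis to pin down the regularity of the one-step integral appearing in~\eqref{e:dpoperator}. First I would fix $u\in b\Borelsigalg{X}^+\cap\bar B$ and observe that, since $u|_{X\setmin K}=0$, for every $(x,a)\in\mathbb K$ the inner integral in~\eqref{e:dpoperator} equals $w(x,a)\Let\int_X Q(\mrm dy|x,a)\,u(y)$; as $u$ is bounded and measurable, part~(ii) of Assumption~\eqref{a:key} makes $w$ a continuous and bounded function on $\mathbb K$, and since $Q(\cdot|x,a)$ is a probability measure and $0\le u\le 1$ we have $0\le w\le 1$ on $\mathbb K$. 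Recalling from paragraph~\ref{pgr:policies} that $\mathbb K$ is now exactly the graph of the set-valued map $K\setmin O\ni x\mapsto A(x)$, the continuity of $w$ restricts, for each fixed $x\in K\setmin O$, to continuity of $a\mapsto w(x,a)$ on the nonempty compact set $A(x)$; hence the supremum in~\eqref{e:dpoperator} is attained, and $m(x)\Let\max_{a\in A(x)}w(x,a)$ defines a function on $K\setmin O$ with values in $[0,1]$.

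Next I would apply Propositions~\ref{p:uhccorr} and~\ref{p:meascorr} to the pair $(A(\cdot),w)$. The correspondence $x\mapsto A(x)$ on $K\setmin O$ has nonempty compact values and is upper hemicontinuous and weakly measurable by Assumption~\eqref{a:key}(i), while $K\setmin O$, being a Borel subset of a Polish space, is separable and metrizable. Proposition~\ref{p:uhccorr}, applied with the (continuous, hence upper semicontinuous) function $w$ on $\mathrm{Graph}(A(\cdot))=\mathbb K$, yields that $m$ is upper semicontinuous, hence Borel measurable, on $K\setmin O$. Proposition~\ref{p:meascorr}, applied to the same data, yields that the maximizer correspondence $x\mapsto\bigl\{a\in A(x)\,\big|\,w(x,a)=m(x)\bigr\}$ is measurable and admits a measurable selector $f$; since $f(x)\in A(x)$ for every $x\in K\setmin O$, we have $f\in\mathbb F$, and $w(x,f(x))=m(x)$ there. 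This gives~\eqref{e:finT}: for $x\in K\setmin O$ both sides reduce to $m(x)$, while for $x\in O\cup(X\setmin K)$ both sides reduce to $\indic{O}(x)$ under the convention of paragraph~\ref{pgr:altrep}.

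It would then remain to verify that $Tu\in b\Borelsigalg{X}^+\cap\bar B$, which is routine bookkeeping. As for measurability, $\indic{O}$ and $\indic{K\setmin O}$ are measurable because $O$ and $K$ are Borel, and $x\mapsto\indic{K\setmin O}(x)\,w(x,f(x))$ is measurable as the product of a measurable indicator with the composition of the measurable map $x\mapsto(x,f(x))$ and the continuous (hence measurable) function $w$; so $Tu$ is measurable on all of $X$. As for the bounds and support condition, the supports of $\indic{O}$ and $\indic{K\setmin O}$ are disjoint and $0\le m\le 1$, so $0\le Tu\le 1$ pointwise, whence $\norm{Tu}_{\Lp\infty(X)}\le 1$, and $Tu$ vanishes on $X\setmin K$ since both indicator factors do there. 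Hence $Tu\in\bar B\cap b\Borelsigalg{X}^+$, i.e., $T$ maps $b\Borelsigalg{X}^+\cap\bar B$ into itself.

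I expect the main obstacle to be the measurable-selection step, and specifically the care needed to invoke Proposition~\ref{p:meascorr} legitimately: the integrand $w$ is a priori defined only on the graph $\mathbb K$, not on the full product $(K\setmin O)\times A$, so one must either run the argument directly on the graph or note that the joint continuity of $w$ on $\mathbb K$ already supplies the Carath\'eodory-type regularity --- measurability in $x$ together with continuity in $a$ along each compact slice $A(x)$ --- that the measurable maximum theorem consumes. Everything else --- attainment of the maximum from compactness of $A(x)$, the $[0,1]$ bounds on $w$, and the three properties defining $\bar B$ --- follows routinely once the continuity of $w$ furnished by the strong Feller hypothesis is in hand.
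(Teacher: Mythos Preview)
Your proposal is correct and follows essentially the same route as the paper's proof: both invoke the strong Feller hypothesis to obtain continuity of the one-step integral on $\mathbb K$, then apply Propositions~\ref{p:uhccorr} and~\ref{p:meascorr} to the compact-valued, upper hemicontinuous, weakly measurable correspondence $x\mapsto A(x)$ to extract the measurable selector and conclude that $Tu$ is measurable and bounded. Your write-up is in fact slightly more explicit than the paper's about verifying the $\bar B$ conditions and about the domain issue for the Carath\'eodory hypothesis, which the paper dispatches with the one-line remark that continuity on $\mathbb K$ already makes the integral a Carath\'eodory function.
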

		\begin{proof}
			Fix $u\in b\Borelsigalg{X}^+\cap\bar B$. Since the transition kernel $Q$ is strongly Feller on $\mathbb K$, the mapping
			\[
				\mathbb K\ni (x, a)\mapsto S(x, a) \Let  \int_X Q(\mrm dy|x, a)\indic{K}(y)u(y)\in\posR
			\]
			is continuous on $\mathbb K$. Also, $S(x, a)$ is bounded whenever $u$ is, a bound of $S$ being the essential supremum norm of $u$. Therefore, since $A(x)$ is compact for each $x\in X$, the function $S^\star(x) \Let  \sup_{a\in A(x)} S(x, a)$ is well-defined on $K\setmin O$, i.e., the sup is attained on $A(x)$ for $x\in K\setmin O$. We also note that since $K\setmin O$ is a measurable set, by Assumption~\ref{a:key}
			\begin{itemize}[label=$\circ$, leftmargin=*, align=right]
				\item the correspondence $K\setmin O\ni x\mapsto A(x)\subset A$ is upper hemicontinuous, and since $S$ is continuous on $\mathbb K$, the map $K\setmin O\ni x\mapsto S^\star(x) \Let  \max_{a\in A(x)}S(x, a)\in\posR$ is an u.s.c.\ function by Proposition~\ref{p:uhccorr};
				\item the correspondence $K\setmin O\ni x\mapsto A(x)\subset A$ is weakly measurable, and since $S$ is continuous on $\mathbb K$ (and therefore is a Carath\'eodory function), there exists a measurable selector $f\in\mathbb F$ such that $S^\star(x) = S(x, f(x))$ for all $x\in K\setmin O$ by Proposition~\ref{p:meascorr}.
			\end{itemize}
			It follows at once that $X\ni x\mapsto Tu(x) = \indic{O}(x) + \indic{K\setmin O}(x) \int_K Q(\mrm dy|x, f(x)) u(y) \in \posR$ is a member of the set $b\Borelsigalg X^+$, and the assertion follows.
		\end{proof}

		\begin{lemma}
		\label{l:dominate}
			Suppose that hypotheses of Theorem~\eqref{t:exist} hold. If $u\in b\Borelsigalg{X}^+\cap\bar B$ satisfies the inequality $u \le Tu$ pointwise on $X$, then also $u \le V^\star$ pointwise on $X$, where $T$ is the dynamic programming operator in~\eqref{e:dpoperator}.
		\end{lemma}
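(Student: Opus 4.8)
The plan is to convert the pointwise inequality $u\le Tu$ into an iterated estimate along the trajectory produced by a single stationary selector and then let the number of iterations tend to infinity. Applying Lemma~\ref{l:L0toL0} to $u$ itself supplies a selector $f\in\mathbb F$ with $Tu(x)=\indic{O}(x)+\indic{K\setmin O}(x)\int_K Q(\mrm dy|x,f)u(y)$, so that the hypothesis $u\le Tu$ becomes
\[
	u(x)\le\indic{O}(x)+\indic{K\setmin O}(x)\int_K Q(\mrm dy|x,f)\,u(y)\qquad\fa x\in X .
\]
I would then work with the deterministic stationary policy $f^\infty\in\Pi_M$ under $\PP^{f^\infty}_x$. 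The cases $x\in O$ and $x\in X\setmin K$ are immediate from the displayed bound: it gives $u(x)\le 1=V^\star(x)$ in the first case, and $u(x)\le 0$, hence $u(x)=0=V^\star(x)$ (using $u\ge 0$), in the second. The content is therefore in $x\in K\setmin O$.

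For such $x$, since $\tau\mn\tau'\ge 1$, the displayed inequality reads $u(x)\le\EE^{f^\infty}_x[\indic{K}(x_1)u(x_1)]$. Splitting $\indic{K}=\indic{O}+\indic{K\setmin O}$, bounding $u\le 1$ on $O$, re-applying the inequality on the event $\{x_1\in K\setmin O\}$, and iterating with the tower property (legitimate because under $f^\infty$ the action $a_k=f(x_k)$ is $\sigalg_k$-measurable) should produce, for every $n\in\N$,
\[
	u(x)\le\sum_{k=1}^{n-1}\EE^{f^\infty}_x\!\Bigl[\Bigl(\prod_{i=0}^{k-1}\indic{K\setmin O}(x_i)\Bigr)\indic{O}(x_k)\Bigr]+R_n(x),\qquad R_n(x)\Let\EE^{f^\infty}_x\!\Bigl[\Bigl(\prod_{i=0}^{n-1}\indic{K\setmin O}(x_i)\Bigr)\indic{K}(x_n)u(x_n)\Bigr].
\]
By the computation in \secref{s:disc} the first summand is exactly $V_n(f^\infty,x)=\PP^{f^\infty}_x(\tau<\tau',\tau<n)$, which increases to $V(f^\infty,x)$ by monotone convergence. (Equivalently, one may phrase this step by checking that the process obtained from~\eqref{e:zetadef} upon replacing $V^\star$ by $u$ is a $\PP^{f^\infty}_x$-submartingale started at $u(x)$, and read off $\EE^{f^\infty}_x[\zeta_n]\ge u(x)$.)

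The main point — and the only place where the hypothesis of Theorem~\ref{t:exist} enters — is showing $R_n(x)\to 0$. Here I would bound $\indic{K}(x_n)u(x_n)\le\indic{K}(x_n)$ using $0\le u\le\indic{K}$ (which holds since $u\in\bar B$), obtaining $R_n(x)\le\EE^{f^\infty}_x[\prod_{i=0}^{n-1}\indic{K\setmin O}(x_i)]=\PP^{f^\infty}_x(\tau\mn\tau'\ge n)$; because $f^\infty\in\Pi_M$ and $\tau\mn\tau'$ is finite $\PP^\pi_x$-a.s.\ for every $\pi\in\Pi_M$ by assumption, the right-hand side vanishes as $n\to\infty$. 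Letting $n\to\infty$ then gives $u(x)\le V(f^\infty,x)\le\sup_{\pi\in\Pi_M}V(\pi,x)=V^\star(x)$, which finishes the proof. I expect the only delicate points to be the careful bookkeeping with the stopped process in the iterated estimate and the (routine but essential) tail bound $R_n(x)\to 0$; everything else is a direct unwinding of the definitions.
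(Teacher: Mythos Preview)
Your proposal is correct and follows essentially the same route as the paper: obtain a selector $f$ from Lemma~\ref{l:L0toL0}, iterate the inequality $u\le Tu$ under the stationary policy $f^\infty$, identify the main term with $V_n(f^\infty,x)$, and show the remainder vanishes using finiteness of $\tau\mn\tau'$. The only cosmetic difference is in the remainder: the paper passes to the limit via dominated convergence and notes $\indic{K\setmin O}(x_{\tau\mn\tau'})=0$ on $\{\tau\mn\tau'<\infty\}$, whereas you bound $R_n(x)\le\PP^{f^\infty}_x(\tau\mn\tau'\ge n)$ directly---both arguments are equivalent and rest on the same hypothesis.
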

		\begin{proof}
			By definition of $T$ it is clear that we only need to examine the validity of the assertion on $K\setmin O$. Suppose that $u\in b\Borelsigalg{X}^+\cap\bar B$ satisfies the inequality $u\le Tu$ pointwise on $X$. By Lemma~\ref{l:L0toL0} we know that there exists $f\in\mathbb F$ satisfying
			\[
				Tu(x) = \indic{O}(x) + \indic{K\setmin O}(x)\int_K Q(\mrm dy|x, f) u(y)\quad \fa x\in K\setmin O.
			\]
			A straightforward calculation shows that if $u \le Tu$ then $Tu \le T\circ Tu$ on $K\setmin O$. Fix $x\in K\setmin O$. Applying the inequality $u\le Tu$ repeatedly we have
			\begin{align*}
				u(x) & \le \indic{O}(x) + \indic{K\setmin O}(x)\int_K Q(\mrm d\xi_1|x, f) u(\xi_1) \\
				& \le \indic{O}(x) + \indic{K\setmin O}(x)\int_K Q(\mrm d\xi_1|x, f)\biggl[\indic{O}(\xi_1) + \indic{K\setmin O}(\xi_1)\int_K Q(\mrm d\xi_2|\xi_1, f) u(\xi_2)\biggr]\\
				& \cdots
			\end{align*}
			and after $n$ steps
			\begin{align*}
				u(x) & \le \indic{O}(x) + \indic{K\setmin O}(x)\int_K Q(\mrm d\xi_1|x, f)\biggl[\indic{O}(\xi_1) + \ldots \\
				& \qquad\qquad \ldots + \indic{K\setmin O}(\xi_{n-2})\int_K Q(\mrm d\xi_{n-1}|\xi_{n-2}, f)\biggl[\indic{O}(\xi_{n-1})\\
				& \qquad\qquad\qquad\qquad\qquad + \indic{K\setmin O}(\xi_{n-1})\int_K Q(\mrm d\xi_n|\xi_{n-1}, f) u(\xi_{n})\biggr]\cdots\biggr]\\
				& = \Biggl(\indic{O}(x) + \indic{K\setmin O}(x)\int_K Q(\mrm d\xi_1|x, f)\biggl[\indic{O}(\xi_1) + \ldots \\
				& \qquad\qquad \ldots + \indic{K\setmin O}(\xi_{n-2})\int_O Q(\mrm d\xi_{n-1}|\xi_{n-2}, f)\biggr]\Biggr)\\
				& \quad + \Biggl(\indic{K\setmin O}(x)\int_{K\setmin O}Q(\mrm d\xi_1|x, f)\int_{K\setmin O}Q(\mrm d\xi_2|\xi_1, f)\cdots\int_{K}Q(\mrm d\xi_{n}|\xi_{n-1}, f)u(\xi_n)\Biggr).
			\end{align*}
			We claim that the right-hand side of the last equality above is
			\[
				\EE^{f^\infty}_x\Biggl[\sum_{t=0}^{(n-1)\mn\tau\mn\tau'}\indic{O}(x_t)\Biggr] + \EE^{f^\infty}_x\Bigl[\indic{K\setmin O}(x_{(n-1)\mn\tau\mn\tau'})(\indic{K}\cdot u)(x_{n\mn\tau\mn\tau'})\indic{\{\tau\mn\tau' < \infty\}}\Bigr],
			\]
			where $\indic{K}\cdot u(\xi) \Let  \indic{K}(\xi) u(\xi)$ for $\xi\in X$. To see this note that the first term is clear by definition. The second term above is due to the fact that only those trajectories that stay in $K\setmin O$ for $n$ steps (i.e., from stage $0$ through stage $n-1$) contribute to the integrand that features $u$, and this accounts for the factor $\indic{K\setmin O}(x_{(n-1)\mn\tau\mn\tau'})$. Since $\{\tau\mn\tau' < \infty\}$ is a full measure set, the factor $\indic{\{\tau\mn\tau' < \infty\}}$ does not change the value of the integral. Taking the limit of the first term above as $n\to\infty$, the monotone convergence theorem gives
			\[
				\lim_{n\to\infty} \EE^{f^\infty}_x\Biggl[\sum_{t=0}^{(n-1)\mn\tau\mn\tau'} \indic{O}(x_t)\Biggr] = \EE^{f^\infty}_x\Biggl[\sum_{t=0}^{\tau\mn\tau'}\indic{O}(x_t)\Biggr] = V(f^\infty, x) \le V^\star(x),
			\]
			where the last inequality follows from the definition of $V^\star$. Since $u$ is bounded and nonnegative, taking the limit of the second term above as $n\to\infty$, the dominated convergence theorem gives
			\begin{align*}
				\lim_{n\to\infty} & \EE^{f^\infty}_x\Bigl[\indic{K\setmin O}(x_{(n-1)\mn\tau\mn\tau'})(\indic{K}\cdot u)(x_{n\mn\tau\mn\tau'})\indic{\{\tau\mn\tau' < \infty\}}\Bigr]\\
				& = \EE^{f^\infty}_x\Bigl[\indic{K\setmin O}(x_{\tau\mn\tau'})(\indic{K}\cdot u)(x_{\tau\mn\tau'})\indic{\{\tau\mn\tau' < \infty\}}\Bigr]\\
				& = 0
			\end{align*}
			since $\indic{K\setmin O}(x_{\tau\mn\tau'}) = 0$ on the set $\{\tau\mn\tau' < \infty\}$ by definition of the stopping times $\tau$ and $\tau'$. Substituting back we see that $u(x) \le V^\star(x)$, and the assertion follows since $x\in K\setmin O$ is arbitrary.
		\end{proof}

		\begin{lemma}
		\label{l:Vstarsatisfy}
			Suppose that Assumption~\eqref{a:key} holds. Then the value iteration functions $(v_n)_{n\in\Nz}$ defined in~\eqref{e:valueiter} satisfy $v_n\uparrow V^\star$, and the function $V^\star$ satisfies the Bellman equation~\eqref{e:bellmaneqn}.
		\end{lemma}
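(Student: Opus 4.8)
The plan is to recognize each iterate $v_n$ as the optimal value of the $n$-stage truncation of~\eqref{e:problem}, to pass to the limit, and then to read off both $v_n\uparrow V^\star$ and the Bellman equation by soft (monotone-convergence) arguments. First I would record the elementary properties of the iterates. Since every member of $\bar B$ vanishes on $X\setmin K$, the factor $\indic K$ inside the integral in~\eqref{e:valueiter} is redundant and $\int_X Q(\mrm dy|x,a)\indic K(y)u(y) = \int_K Q(\mrm dy|x,a)u(y)$; hence $v_n = Tv_{n-1}$ for the operator $T$ of~\eqref{e:dpoperator}. Because $v_0 = \indic O\in b\Borelsigalg X^+\cap\bar B$, Lemma~\ref{l:L0toL0} shows by induction that $v_n\in b\Borelsigalg X^+\cap\bar B$ for every $n$, and it furnishes a selector $g_{n-1}\in\mathbb F$ with $v_n(x) = \indic O(x) + \indic{K\setmin O}(x)\int_K Q(\mrm dy|x,g_{n-1})v_{n-1}(y)$ for all $x$. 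Monotonicity is immediate: $v_0\le Tv_0 = v_1$ since the $\indic{K\setmin O}$-summand is nonnegative, and $T$ is monotone, so $v_n\le v_{n+1}$ for all $n$; put $V_\infty\Let\lim_n v_n$, which again lies in $b\Borelsigalg X^+\cap\bar B$.

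The heart of the argument is to establish $v_n(x) = \sup_{\pi\in\Pi_M}W_n(\pi,x)$, where $W_n(\pi,x)\Let\PP^\pi_x(\tau<\tau',\,\tau\le n)$. Decomposing $\{\tau<\tau',\tau\le n\}$ into the disjoint events $\{x_0,\dots,x_{k-1}\in K\setmin O,\ x_k\in O\}$ for $k\le n$ and conditioning on the first transition, an induction on $n$ that invokes the Markov property within $\Pi_M$ (so that the one-step-shifted policy is again admissible) yields $W_n(\pi,x)\le v_n(x)$ for all $\pi\in\Pi_M$ and $x\in X$. Equality is attained by the deterministic Markov policy $\pi^{(n)}$ whose first $n$ elements are $(g_{n-1},g_{n-2},\dots,g_0)$: expanding $v_n = \indic O + \indic{K\setmin O}\int_K Q(\mrm dy|\cdot,g_{n-1})v_{n-1}$ down to $v_0 = \indic O$, exactly as in the telescoping computation in the proof of Lemma~\ref{l:dominate}, matches $v_n(x)$ term-by-term with $W_n(\pi^{(n)},x)$. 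This two-sided identification is the one genuinely delicate point: the inequality direction requires the Markov property to be used so that shifted policies remain admissible, and the equality direction requires the bookkeeping of the telescoped integrals to be matched precisely with the probabilistic decomposition of $\{\tau<\tau',\tau\le n\}$, all against the background that the process is killed at $\tau\mn\tau'$ and that policy elements are defined only on $\mathbb K$.

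With the identification in hand, $v_n\uparrow V^\star$ follows quickly. For fixed $\pi\in\Pi_M$ the events $\{\tau<\tau',\tau\le n\}$ increase to $\{\tau<\tau',\tau<\infty\}$, so $W_n(\pi,x)\uparrow V(\pi,x)$; since $W_n(\pi,x)\le v_n(x)\le V_\infty(x)$, letting $n\to\infty$ and taking the supremum over $\pi\in\Pi_M$ gives $V^\star\le V_\infty$ in view of~\eqref{e:Vstar}. Conversely $v_n(x) = W_n(\pi^{(n)},x)\le V(\pi^{(n)},x)\le V^\star(x)$ for every $n$, so $V_\infty\le V^\star$; hence $V_\infty = V^\star$ and $v_n\uparrow V^\star$.

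It remains to verify $V^\star = TV^\star$, which is~\eqref{e:bellmaneqn} once the redundant $\indic K$ is reinstated. This is trivial on $O$ and on $X\setmin K$, so fix $x\in K\setmin O$. For each $a\in A(x)$, monotone convergence gives $\int_K Q(\mrm dy|x,a)V^\star(y) = \lim_n\int_K Q(\mrm dy|x,a)v_n(y)\le\lim_n v_{n+1}(x) = V^\star(x)$, whence $\sup_{a\in A(x)}\int_K Q(\mrm dy|x,a)V^\star(y)\le V^\star(x)$; and since $v_n\le V^\star$ we have $v_{n+1}(x)\le\sup_{a\in A(x)}\int_K Q(\mrm dy|x,a)V^\star(y)$ for every $n$, yielding the reverse inequality. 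Thus $V^\star = TV^\star$, and because $V^\star\in b\Borelsigalg X^+\cap\bar B$, Lemma~\ref{l:L0toL0} ensures the supremum defining $TV^\star$ is attained, i.e.\ it is a maximum, which is exactly~\eqref{e:bellmaneqn} with $\psi = V^\star$.
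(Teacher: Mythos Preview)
Your argument is correct and it differs from the paper's in two respects.

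First, to obtain $V_\infty\le V^\star$ the paper does not build the finite-horizon optimal policies $\pi^{(n)}=(g_{n-1},\dots,g_0)$; instead it first establishes that the limit $v^\star$ solves the Bellman equation and then invokes Lemma~\ref{l:dominate} (applied with $u=v^\star$, using $v^\star=Tv^\star$) to conclude $v^\star\le V^\star$. Your direct construction via the selectors $g_k$ is a legitimate alternative, and in fact it sidesteps a subtlety: Lemma~\ref{l:dominate} is stated under the full hypotheses of Theorem~\ref{t:exist}, in particular the finiteness of $\tau\mn\tau'$, whereas Lemma~\ref{l:Vstarsatisfy} only assumes Assumption~\ref{a:key}. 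Your route therefore matches the stated hypotheses more tightly.

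Second, for the passage $\lim_n Tv_n = T(\lim_n v_n)$ on $K\setmin O$ the paper argues via the strong Feller property: it notes that the functions $a\mapsto\int_K Q(\mrm dy|x,a)v_n(y)$ and $a\mapsto\int_K Q(\mrm dy|x,a)v^\star(y)$ are continuous on the compact set $A(x)$, that the former increase pointwise to the latter, and then concludes $\max_a$ commutes with the limit. Your sandwich argument $\int_K Q(\mrm dy|x,a)v_n(y)\le v_{n+1}(x)$ (one direction) and $v_{n+1}(x)\le\sup_a\int_K Q(\mrm dy|x,a)V^\star(y)$ (the other) is more elementary and uses only monotonicity and monotone convergence; the strong Feller property enters your proof solely through Lemma~\ref{l:L0toL0} to guarantee the $\sup$ is a $\max$. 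Both approaches are valid; yours is slightly leaner here.
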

		\begin{proof}
			From the definition of the value-iteration functions $(v_n)_{n\in\Nz}$ in~\eqref{e:valueiter} we see that $(v_n)_{n\in\Nz}$ is a monotone increasing sequence bounded above by $\indic{X}$. Therefore there exists a measurable function $v^\star:X\lra[0, 1]$ such that $v_n\uparrow v^\star$ pointwise on $X$. By definition of $v_n$ we have
			\[
				\EE^\pi_x\Biggl[\sum_{t=0}^{(n-1)\mn\tau\mn\tau'}\indic{O}(x_t)\Biggr] \le \sup_{\pi\in\Pi_M}\EE^\pi_x\Biggl[\sum_{t=0}^{(n-1)\mn\tau\mn\tau'}\indic{O}(x_t)\Biggr] = v_n(x),
			\]
			and the monotone convergence theorem shows that
			\[
				v^\star(x) = \lim_{n\to\infty}v_n(x) \ge \lim_{n\to\infty}\EE^\pi_x\Biggl[\sum_{t=0}^{(n-1)\mn\tau\mn\tau'}\indic{O}(x_t)\Biggr] = \EE^\pi_x\Biggl[\sum_{t=0}^{\tau\mn\tau'}\indic{O}(x_t)\Biggr] = V(\pi, x).
			\]
			Taking the supremum over $\pi\in\Pi_M$ on the right-hand side shows that $v^\star \ge V^\star$ pointwise on $X$. Note that $v_n|_{O} = 1$ and $v_n|_{X\setmin K} = 0$ for all $n$; therefore $v^\star|_O = 1$ and $v^\star|_{X\setmin K} = 0$.

			Let us define the maps
			\begin{align*}
				\mathbb K\ni (x, a)\mapsto T'v_n(x, a) & \Let  \int_{K}Q(\mrm dy|x, a) v_n(y)\in[0, 1],\\
				\mathbb K\ni (x, a)\mapsto T'v^\star(x, a) & \Let  \int_{K}Q(\mrm dy|x, a) v^\star(y)\in[0, 1].
			\end{align*}
			We note that the transition kernel $Q$ is strongly Feller by Assumption~\ref{a:key}, and therefore $T'v_n, n\in\Nz$ and $T'v^\star$ are continuous functions on $\mathbb K$. Moreover, for all $n\in\Nz$ we define
			\begin{equation}
			\label{e:Tprimeoutside}
			\begin{aligned}
				T'v_n(x, a) & = T'v^\star(x, a) = 1\quad \text{for $x\in O$ and $a\in A(x)$},\\
				T'v_n(x, a) & = T'v^\star(x, a) = 0\quad \text{for $x\in X\setmin K$ and $a\in A(x)$},\\
			\end{aligned}
			\end{equation}
			Since $v_n\uparrow v^\star$ pointwise on $X$, it follows from the definitions above and the monotone convergence theorem that for all $x\in X$ and $a\in A(x)$
			\begin{equation}
			\label{e:Tprimeonannulus}
				T'v_n(x, a)\indic{K\setmin O}(x)\uparrow T'v^\star(x, a)\indic{K\setmin O}(x).
			\end{equation}
			Fix $x\in K\setmin O$. Since $T'v_n$ and $T'v^\star$ are continuous functions on $\mathbb K$, for each $n\in\Nz$ both $\sup_{a\in A(x)}T'v_n(x, a)$ and $\sup_{a\in A(x)}T'v^\star(x, a)$ are attained on $A(x)$. From the definition of $(v_n)_{n\in\Nz}$ in \eqref{e:valueiter} we have $\max_{a\in A(x)}T'v_n(x, a) \le \max_{a\in A(x)}T'v^\star(x, a)$ for all $n\in\Nz$. Also, $\bigl(\max_{a\in A(x)}T'v_n(x, a)\bigr)_{n\in\Nz}$ is a nondecreasing sequence of numbers bounded above by $1$, and therefore it attains a limit. If this limit is strictly less than $\max_{a\in A(x)}T'v^\star(x, a)$, standard easy arguments may be invoked to show that the sequence of continuous functions $\bigl(T'v_n(x, \cdot)\bigr)_{n\in\Nz}$ cannot converge pointwise to $T'v^\star(x, \cdot)$ on $A(x)$, which contradicts~\eqref{e:Tprimeonannulus}. It follows that whenever $x\in K\setmin O$,
			\begin{align*}
				v^\star(x) & = \lim_{n\to\infty} v_n(x) = \lim_{n\to\infty} T v_{n-1}(x)\\
				& = \lim_{n\to\infty} \max_{a\in A(x)} T'v_{n-1}(x, a) = \max_{a\in A(x)} T'v^\star(x, a)\\
				& = Tv^\star(x).
			\end{align*}
			Together with~\eqref{e:Tprimeoutside} this shows that $v^\star$ satisfies the Bellman equation~\eqref{e:bellmaneqn} pointwise on $X$, i.e., $v^\star = Tv^\star$. We have already seen above that $v^\star \ge V^\star$ pointwise on $X$. Since $v^\star = Tv^\star$, the reverse inequality follows from Lemma~\ref{l:dominate}. Therefore, we conclude that $v^\star = V^\star$ identically on $X$.
		\end{proof}

		\begin{lemma}
		\label{l:dsstrategy}
			Let $f^\infty$ be a deterministic stationary policy. Then we have
			\begin{equation}
			\label{e:dsstrategy}
				V(f^\infty, x) = 
				\begin{cases}
					1 & \text{if }x\in O,\\
					\displaystyle{\int_{K} Q(\mrm dy|x, f) V(f^\infty, y)} &  \text{if } x\in K\setmin O,\\
					0 & \text{otherwise}.
				\end{cases}
			\end{equation}
		\end{lemma}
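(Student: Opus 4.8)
The plan is to establish the three branches of the piecewise formula~\eqref{e:dsstrategy} separately, reducing the real work to a one-step conditioning argument on $K\setmin O$. For $x\in O$, the definitions in~\eqref{e:taudef} give $\tau = 0$, while $O\subsetneqq K$ forces $x\notin X\setmin K$ and hence $\tau'\ge 1$; thus $\{\tau < \tau',\,\tau < \infty\}$ is the whole sample space and $V(f^\infty, x) = 1$. For $x\in X\setmin K$ we have $\tau' = 0\le\tau$, so $\{\tau<\tau'\}=\emptyset$ and $V(f^\infty,x)=0$. Hence everything hinges on the case $x\in K\setmin O$, where $\tau\mn\tau'\ge 1$.

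For $x\in K\setmin O$ I would work with the finite-horizon values $V_n(f^\infty,x)\Let\PP^{f^\infty}_x(\tau<\tau',\,\tau<n)$ from~\secref{s:disc}, for which $V(f^\infty,x)=\lim_{n\to\infty}V_n(f^\infty,x)$ by monotone convergence. The key step is the one-step recursion
\[
	V_{n+1}(f^\infty,x)=\indic{O}(x)+\indic{K\setmin O}(x)\int_K Q(\mrm dy|x,f)\,V_n(f^\infty,y),\qquad V_0(f^\infty,\cdot)\equiv 0.
\]
For $x\in K\setmin O$ this follows by conditioning on the first transition and partitioning according to where $x_1$ lands: since $x_0\in K\setmin O$ one has $\tau\ge 1$ and $\tau'\ge 1$, and then on $\{x_1\in O\}$ one gets $\tau=1<\tau'$, on $\{x_1\in X\setmin K\}$ one gets $\tau'=1<\tau$, while on $\{x_1\in K\setmin O\}$ stationarity of $f^\infty$ gives $\PP^{f^\infty}_x(\tau<\tau',\,\tau<n+1\mid\sigalg_1)=V_n(f^\infty,x_1)$. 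Integrating against $Q(\cdot|x,f)$ via~\eqref{e:actiontrans}--\eqref{e:statetrans}, and using $V_n(f^\infty,\cdot)\equiv 1$ on $O$ to combine the $O$- and $(K\setmin O)$-integrals into $\int_K$, gives $V_{n+1}(f^\infty,x)=\int_K Q(\mrm dy|x,f)\,V_n(f^\infty,y)$. Letting $n\to\infty$ and applying monotone convergence inside the integral (the integrands increase to $V(f^\infty,\cdot)$) yields the stated identity on $K\setmin O$.

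The hard part is justifying, on $\{x_1\in K\setmin O\}$, that the conditional success probability given the past depends only on $x_1$ and equals $V_n(f^\infty,x_1)$ (and, in the limit, $V(f^\infty,x_1)$). This is a Markov/time-homogeneity statement: under the deterministic stationary policy $f^\infty$ the state process $(x_t)_{t\in\Nz}$ is a time-homogeneous Markov chain with one-step kernel $(B,x)\mapsto Q(B|x,f(x))$ (as recalled after Definition~\ref{d:mcp}), and on the event $\{x_1\in K\setmin O\}$ the hitting times of $O$ and $X\setmin K$ for the chain restarted at $x_1$ are exactly the original $\tau,\tau'$ shifted by one. Carrying out the argument through the finite horizon $n$ first, as above, keeps this step a routine induction on $n$ using only the defining relations~\eqref{e:actiontrans}--\eqref{e:statetrans} and the stationarity of $f^\infty$, and it sidesteps any delicate measurability or integrability issue at the infinite horizon, the passage from $V_n(f^\infty,\cdot)$ to $V(f^\infty,\cdot)$ then being handled purely by monotone convergence, exactly as in the derivation in~\secref{s:disc}.
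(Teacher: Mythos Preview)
Your argument is correct. The trivial branches are handled identically, and your one-step recursion for $V_n(f^\infty,\cdot)$ together with monotone convergence is a valid route to the identity on $K\setmin O$. One small caveat: the statement ``$V_n(f^\infty,\cdot)\equiv 1$ on $O$'' holds only for $n\ge 1$ (since $V_0\equiv 0$), but this is harmless because the $n=0$ step of your recursion is vacuous anyway.

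The paper takes a slightly different path: it works directly at the infinite horizon via the total-reward representation $V(f^\infty,x)=\EE^{f^\infty}_x\bigl[\sum_{t=0}^{\tau\mn\tau'}\indic{O}(x_t)\bigr]$, peels off the $t=0$ term, and then applies the Markov property once to the residual sum $\sum_{t=1}^{\tau\mn\tau'}\indic{O}(x_t)$ conditioned on $x_1$. So the paper does a single one-step conditioning on the infinite-horizon object, whereas you do a one-step conditioning at each finite horizon and then pass to the limit. Your approach avoids manipulating an infinite random sum and keeps the Markov/shift argument at the level of a routine finite-horizon induction, at the cost of an extra (but painless) monotone-convergence step; the paper's approach is shorter but leans on the reward representation derived earlier in~\secref{s:disc}.
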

		\begin{proof}
			For $x\in O\cup(X\setmin K)$ the assertions are trivial. Fix $x\in K\setmin O$. From the definition of $V(f^\infty, x)$ we have
			\begin{align*}
				V(f^\infty, x) & = \EE^{f^\infty}\Biggl[\sum_{t=0}^{\tau\mn\tau'}\indic{O}(x_t)\,\Bigg|\,x_0 = x\Biggr]\nonumber\\
				& = \EE^{f^\infty}\Biggl[\indic{O}(x_0)\indic{\{\tau\mn\tau' = 0\}} + \indic{\{\tau\mn\tau'> 0\}}\sum_{t=1}^{\tau\mn\tau'}\indic{O}(x_t)\,\Bigg|\,x_0 = x\Biggr]\nonumber\\
				& = \indic{O}(x) + \EE^{f^\infty}\Biggl[\indic{\{\tau\mn\tau' > 0\}}\sum_{t=1}^{\tau\mn\tau'}\indic{O}(x_t)\,\Bigg|\, x_0 = x\Biggr].
			\end{align*}
			Since $\{\tau\mn\tau' > 0\} = \{x_0 \in K\setmin O\}$ and this event is $\sigalg_0$-measurable, 
			\[
				\EE^{f^\infty}\Biggl[\indic{\{\tau\mn\tau' > 0\}}\sum_{t=1}^{\tau\mn\tau'}\indic{O}(x_t)\,\Bigg|\, x_0 = x\Biggr] = \indic{K\setmin O}(x)\EE^{f^\infty}\Biggl[\sum_{t=1}^{\tau\mn\tau'}\indic{O}(x_t)\,\Bigg|\,x_0 = x\Biggr].
			\]
			Therefore, 
			\begin{align*}
				V(f^\infty, x) & = \indic{O}(x) + \indic{K\setmin O}(x) \EE^{f^\infty}\Biggl[\sum_{t=1}^{\tau\mn\tau'}\indic{O}(x_t)\,\Bigg|\,x_0 = x\Biggr]\\
				& = \indic{O}(x) + \indic{K\setmin O}(x)\EE^{f^\infty}\Biggl[\sum_{t=1}^{\tau}\indic{O}(x_{t\mn\tau\mn\tau'})\,\Bigg|\,x_0 = x\Biggr].
			\end{align*}
			Considering the fact that $V(f^\infty, x) = 0$ for $x\in X\setmin K$ by definition, the Markov property shows that the second term on the right-hand side above equals
			\begin{align*}
				\indic{K\setmin O}(x) &\EE^{f^\infty}\Biggl[\EE^{f^\infty}\Biggl[\sum_{t=1}^{\tau}\indic{O}(x_{t\mn\tau\mn\tau'})\,\bigg|\,x_{1\mn\tau\mn\tau'}\Biggr]\,\Bigg|\,x_0 = x\Biggr]\\
				& = \indic{K\setmin O}(x) \int_K Q(\mrm dy|x, f)\EE^{f^\infty}\Biggl[\sum_{t=1}^{\tau}\indic{O}(x_{t\mn\tau\mn\tau'})\,\Bigg|\,x_{1\mn\tau\mn\tau'} = y\Biggr]\\
				& = \indic{K\setmin O}(x) \int_K Q(\mrm dy|x, f) V(f^\infty, y).
			\end{align*}
			Collecting the above equations we obtain~\eqref{e:dsstrategy}, and this completes the proof.
		\end{proof}

		We are now ready for the proof of the first main result.

		\begin{proof}[Proof of Theorem~\eqref{t:exist}]
			(i) Note that by definition $V^\star$ is nonnegative. The fact that $V^\star$ satisfies the Bellman equation follows from Lemma~\ref{l:Vstarsatisfy}. In view of the definition of $\bar B$ in Theorem~\ref{t:exist} and Lemma~\ref{l:Vstarsatisfy} we conclude that $V^\star$ is minimal in $b\Borelsigalg{X}^+\cap\bar B$ because $u = Tu$ pointwise on $K\setmin O$ implies that $u \le V^\star$ pointwise on $K\setmin O$ for any $u\in b\Borelsigalg{X}^+\cap\bar B$.

			(ii) Lemma~\ref{l:L0toL0} guarantees the existence of a selector $f_\star\in\mathbb F$ such that~\eqref{e:selectorcond} holds. Iterating the equality~\eqref{e:selectorcond} (or~\eqref{e:selectorcondalt}) it follows as in the proof of Lemma~\ref{l:dominate} that for $x\in X$,
			\[
				V^\star(x) = \EE^{f_\star^\infty}_x\Biggl[\sum_{t=0}^{(n-1)\mn\tau\mn\tau'}\indic{O}(x_t)\Biggr] + \EE^{f_\star^\infty}_x\bigl[\indic{K\setmin O}(x_{(n-1)\mn\tau\mn\tau'})(\indic{K} V^\star)(x_{n\mn\tau\mn\tau'})\bigr].
			\]
			Taking limits as $n\to\infty$ on the right, the monotone and dominated convergence theorems give $V^\star(x) = V(f_\star^\infty, x)$. Since $x$ is arbitrary, $V^\star(\cdot) = V(f_\star^\infty, \cdot)$ on $K\setmin O$ and that $f_\star^\infty$ is an optimal policy. Conversely, by Lemma~\ref{l:dsstrategy} it follows that under the stationary deterministic strategy $f_\star^\infty$ we have~\eqref{e:dsstrategy} with $f_\star$ in place of $f$, which is identical to~\eqref{e:selectorcond}.
		\end{proof}

		\subsection{Proofs of the results in~\secref{s:martchar}}
		\label{s:martproofs}
		For the purposes of this subsection we let $\Pi$ denote the set of admissible policies such that $\pi_t$ is defined on $\mathbb K$ whenever $(\pi_t)_{t\in\Nz}\in\Pi$.

		\begin{lemma}
		\label{l:bothsupmart}
			For every policy $\pi\in\Pi$ and initial state $x\in X$ the processes $(\zeta_n)_{n\in\Nz}$ and $\bigl(\indic{K\setmin O}(x_{(n-1)\mn\tau\mn\tau'})(\indic{K}\cdot V^\star)(x_{n\mn\tau\mn\tau'})\bigr)_{n\in\Nz}$ are both nonnegative $(\sigalg_n)_{n\in\Nz}$- supermartingales under $\PP^\pi_x$.
		\end{lemma}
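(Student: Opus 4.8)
The plan is to verify the supermartingale inequality for $(\zeta_n)_{n\in\Nz}$ directly, the one nontrivial estimate being a pointwise application of the Bellman equation~\eqref{e:bellmaneqn}, and then to obtain the statement for the second process by subtracting off the accumulated reward $W_n(\pi,x)$.

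First I would record three elementary reductions (write $m\Let\tau\mn\tau'$). \emph{(a)} By Theorem~\ref{t:exist} the optimal value function vanishes on $X\setmin K$, so $\indic K\cdot V^\star=V^\star$, and the second summand in~\eqref{e:zetadef} is $\indic{K\setmin O}(x_{(n-1)\mn m})\,V^\star(x_{n\mn m})$. \emph{(b)} For $n\ge1$, since $x_0,\dots,x_{n-1}\in K\setmin O$ exactly on $\{m\ge n\}$ whereas on $\{m\le n-1\}$ the point $x_m$ lies in $O\cup(X\setmin K)$, one has $\indic{K\setmin O}(x_{(n-1)\mn m})=\indic{\{m\ge n\}}$, and $x_{n\mn m}=x_n$ on $\{m\ge n\}$. \emph{(c)} On $\{m\ge n\}$ no state up to stage $n-1$ lies in $O$, so $W_n(\pi,x)=0$, while on $\{m\le n-1\}$ one has $W_n(\pi,x)=\indic O(x_m)=\indic{\{\tau<\tau'\}}$. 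Combining (a)--(c) with $\zeta_0=V^\star(x_0)$ yields the convenient pathwise description, valid for all $n\in\Nz$,
\[
\zeta_n=V^\star(x_n)\ \text{on}\ \{m\ge n\},\qquad \zeta_n=\indic{\{\tau<\tau'\}}\ \text{on}\ \{m\le n-1\}.
\]
In particular $\zeta_n$ is $\sigalg_n$-measurable, nonnegative, and bounded by $1$, hence $\PP^\pi_x$-integrable; likewise the second process, which by (a)--(b) equals $\indic{K\setmin O}(x_{(n-1)\mn m})\,V^\star(x_{n\mn m})$, is nonnegative, $\sigalg_n$-adapted, and bounded.

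Next I would fix $n\in\Nz$ and split on the three $\sigalg_n$-measurable events $\{m\le n-1\}$, $\{m=n\}$, $\{m\ge n+1\}$. On the first two the description above gives $\zeta_{n+1}=\zeta_n$ pathwise; on $\{m=n\}$ this uses $V^\star(x_n)=\indic O(x_n)=\indic{\{\tau<\tau'\}}$, which holds because $m=n<\infty$ forces $x_n\in O\cup(X\setmin K)$, and these two sets are disjoint. On $\{m\ge n+1\}$ one has $\zeta_n=V^\star(x_n)$ with $x_n\in K\setmin O$, and $\zeta_{n+1}=V^\star(x_{n+1})$; conditioning first on $a_n$ via~\eqref{e:actiontrans}--\eqref{e:statetrans} and using $V^\star|_{X\setmin K}=0$,
\[
\EE^\pi_x\bigl[V^\star(x_{n+1})\,\big|\,\sigalg_n\bigr]
=\int_{A(x_n)}\pi_n(\mrm da|h_n)\int_K Q(\mrm dy|x_n,a)\,V^\star(y)
\le\max_{a\in A(x_n)}\int_K Q(\mrm dy|x_n,a)\,V^\star(y)
=V^\star(x_n),
\]
the final equality being~\eqref{e:bellmaneqn} at $x_n\in K\setmin O$. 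Assembling the three cases gives $\EE^\pi_x[\zeta_{n+1}\,|\,\sigalg_n]\le\zeta_n$ $\PP^\pi_x$-almost surely, so $(\zeta_n)_{n\in\Nz}$ is a nonnegative supermartingale.

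Finally, for the second process I would use that, for $n\ge1$, it coincides with $\zeta_n-W_n(\pi,x)$ and that $\bigl(W_n(\pi,x)\bigr)_{n\in\Nz}$ is a nondecreasing, $\sigalg_{n-1}$-measurable process bounded by $1$ (a bounded submartingale), whence
\[
\EE^\pi_x\bigl[\zeta_{n+1}-W_{n+1}(\pi,x)\,\big|\,\sigalg_n\bigr]\le\zeta_n-W_{n+1}(\pi,x)\le\zeta_n-W_n(\pi,x);
\]
this is exactly the required supermartingale inequality, nonnegativity is immediate, and the $n=0$ term is fixed by the convention defining $\zeta_0$. The step I expect to demand the most care is the bookkeeping behind reductions (b)--(c) and the pathwise identity $\zeta_{n+1}=\zeta_n$ on $\{m\le n\}$: one has to track the two stopping times together with the one-step offset between the indices $(n-1)\mn m$ and $n\mn m$. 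Once that is in place the supermartingale estimate is a one-line consequence of~\eqref{e:bellmaneqn}, and the second assertion is then purely formal.
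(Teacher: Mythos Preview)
Your proof is correct and follows essentially the same strategy as the paper's: the key estimate on $\{\tau\mn\tau'>n\}$ is the Bellman equation~\eqref{e:bellmaneqn}, and the second process is handled by subtracting the nondecreasing reward $W_n(\pi,x)$. Your organization differs only in presentation---you first reduce $\zeta_n$ to the clean pathwise description $\zeta_n=V^\star(x_n)$ on $\{m\ge n\}$ and $\zeta_n=\indic{\{\tau<\tau'\}}$ on $\{m\le n-1\}$ and then split into the three $\sigalg_n$-measurable cases, whereas the paper keeps the indicator algebra in place and rewrites $\zeta_{n+1}$ directly as $W_n(\pi,x)+\indic{K\setmin O}(x_{(n-1)\mn m})\bigl(\indic{O}(x_{n\mn m})+\indic{K\setmin O}(x_{n\mn m})(\indic{K}V^\star)(x_{(n+1)\mn m})\bigr)$ before conditioning; both routes collapse to the same one-line supermartingale inequality.
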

		\begin{proof}
			It is clear that both processes are nonnegative and $(\sigalg_n)_{n\in\Nz}$-adapted. Fix $n\in\N$, an initial state $x\in X$, a policy $\pi\in\Pi$, and on the event $\{\tau\mn\tau' > n\}$ fix a history $h_{n} = \bigl(x, a_0, x_1, a_1, \ldots, x_{n-1}, a_{n-1}, x_{n}\bigr)$. Let $a_{n} \Let  \pi_n(h_n)$ on $\{\tau\mn\tau' > n\}$. Then
			\begin{align*}
				\zeta_{n+1} & = W_{n+1}(\pi, x) + \indic{K\setmin O}(x_{n\mn\tau\mn\tau'})(\indic{K} V^\star)(x_{(n+1)\mn\tau\mn\tau'})\\
					& = W_n(\pi, x) + \indic{O}(x_{n\mn\tau\mn\tau'})\indic{\{\tau\mn\tau' = n\}} + \indic{K\setmin O}(x_{n\mn\tau\mn\tau'})(\indic{K} V^\star)(x_{(n+1)\mn\tau\mn\tau'})\\
					& = W_n(\pi, x) + \indic{\{\tau\mn\tau' = n\}}\indic{O}(x_{n\mn\tau\mn\tau'}) + \indic{\{\tau\mn\tau' > n\}}(\indic{K} V^\star)(x_{(n+1)\mn\tau\mn\tau'}).
			\end{align*}
			Since $\{x_{n\mn\tau\mn\tau'} \in O\} \subset \{\tau\mn\tau' = n\}$, we have
			\begin{align*}
				\indic{\{\tau\mn\tau' = n\}} & \indic{O}(x_{n\mn\tau\mn\tau'}) + \indic{\{\tau\mn\tau' > n\}}(\indic{K} V^\star)(x_{(n+1)\mn\tau\mn\tau'})\\
				& = \indic{\{\tau\mn\tau' \ge n\}}\bigl(\indic{O}(x_{n\mn\tau\mn\tau'}) + \indic{K\setmin O}(x_{n\mn\tau\mn\tau'})(\indic{K} V^\star)(x_{(n+1)\mn\tau\mn\tau'})\bigr).
			\end{align*}
			Since $\{\tau\mn\tau' \ge n\} = \{\tau\mn\tau' > n-1\} = \{x_{(n-1)\mn\tau\mn\tau'}\in K\setmin O\}$, it follows that
			\begin{align*}
				\zeta_{n+1} = W_n(\pi, x) & + \indic{K\setmin O}(x_{(n-1)\mn\tau\mn\tau'})\cdot\\
				& \bigl(\indic{O}(x_{n\mn\tau\mn\tau'}) + \indic{K\setmin O}(x_{n\mn\tau\mn\tau'})(\indic{K}V^\star)(x_{(n+1)\mn\tau\mn\tau'})\bigr).
			\end{align*}
			Therefore, keeping in mind the definition of $a_n$ above,
			\begin{align}
			\label{e:keymartineq}
				\EE^\pi_x\bigl[\zeta_{n+1}\,\big|\,\sigalg_{n\mn\tau\mn\tau'}\bigr] & = W_n(\pi, x) + \indic{K\setmin O}(x_{(n-1)\mn\tau\mn\tau'}) T'V^\star(x_{n\mn\tau\mn\tau'}, a_n)\nonumber\\
				& \le W_n(\pi, x) + \indic{K\setmin O}(x_{(n-1)\mn\tau\mn\tau'})V^\star(x_{n\mn\tau\mn\tau'})\\
				& = \zeta_n,\nonumber
			\end{align}
			where the inequality holds $\PP^\pi_x$-almost surely. Therefore, the process $(\zeta_n)_{n\in\Nz}$ is a nonnegative $(\sigalg_{n\mn\tau\mn\tau'})_{n\in\Nz}$- supermartingale, and hence also a $(\sigalg_n)_{n\in\Nz}$- supermartingale. Considering that the sequence $\bigl(W_n(\pi, x)\bigr)_{n\in\Nz}$ is nondecreasing, from the definitions in~\eqref{e:zetadef} and the fact that the process $(\zeta_n)_{n\in\Nz}$ is a $(\sigalg_n)_{n\in\Nz}$- supermartingale we see that the process $\bigl(\indic{K\setmin O}(x_{(n-1)\mn\tau\mn\tau'})(\indic{K}V^\star)(x_{n\mn\tau\mn\tau'})\bigr)_{n\in\Nz}$ is also a $(\sigalg_n)_{n\in\Nz}$- supermartingale under $\PP^\pi_x$.
		\end{proof}


		\begin{proof}[Proof of Theorem~\eqref{t:martcharpolicy}]
			Lemma~\ref{l:bothsupmart} confirms that both of the two adapted processes $(\zeta_n)_{n\in\Nz}$ and $\bigl(\indic{K\setmin O}(x_{(n-1)\mn\tau\mn\tau'})(\indic{K}V^\star)(x_{n\mn\tau\mn\tau'})\bigr)_{n\in\N}$ converge almost surely and are nonincreasing in expectation, both under $\PP^\pi_x$. Let $\Lambda^\pi(x) \Let  \lim_{n\to\infty}\EE^\pi_x[\zeta_n]$. We then have
			\begin{align}
			\label{e:thriftycalc}
				V^\star(x) & = \EE^\pi_x[\zeta_0] \ge \lim_{n\to\infty}\EE^\pi_x[\zeta_n]\nonumber\\
				& = \lim_{n\to\infty}\Bigl(\EE^\pi_x\bigl[W_n(\pi, x)\bigr] + \EE^\pi_x\bigl[\indic{K\setmin O}(x_{(n-1)\mn\tau\mn\tau'})(\indic{K}V^\star)(x_{n\mn\tau\mn\tau'})\bigr]\Bigr)\\
				& \ge V(\pi, x).\nonumber
			\end{align}
			The assertion is now an immediate consequence of~\eqref{e:thriftycalc}.
		\end{proof}


		\begin{proof}[Proof of Theorem~\eqref{t:thriftychar}]
			Suppose that (i) holds. Since $\EE^\pi_x[\zeta_n]$ is nonincreasing with $n$ it follows that $\EE^\pi_x[\zeta_{n+1}] = \EE^\pi_x[\zeta_n] = \ldots = \EE^\pi_x[\zeta_0] = V^\star(x)$ for every $n\in\N$. Therefore, equality must hold $\PP^\pi_x$-almost surely in~\eqref{e:keymartineq}, and (ii) follows.

			Suppose that (ii) holds. Then equality holds in~\eqref{e:keymartineq} almost surely under $\PP^\pi_x$, and therefore $\PP^\pi_x$-almost everywhere on the set $\{x_{n\mn\tau\mn\tau'}\in K\setmin O\} = \{\tau\mn\tau' > n\}$ we have $T'V^\star(x_n, a_n) = V^\star(x_n)$, and (iii) follows.

			Suppose that (iii) holds. Then taking expectations in~\eqref{e:keymartineq} we arrive at $\EE^\pi_x[\zeta_{n+1}] = \EE^\pi_x[\zeta_n] = \ldots = \EE^\pi_x[\zeta_0] = V^\star(x)$. As a result we have $\Lambda^\pi(x) = V^\star(x)$, and (i) follows.
		\end{proof}


		\begin{proof}[Proof of Theorem~\eqref{t:Vprimechar}]
			It follows readily from the definition of the stopping times $\tau$ and $\tau'$ that the process $(\zeta'_n)_{n\in\Nz}$ defined in~\eqref{e:zetapdef} is a bounded process, and by assumption it is a $(\sigalg_n)_{n\in\Nz}$ -martingale under $\PP^{\pi^\star}_x$. Doob's Optional Sampling Theorem~\cite[Theorem~2, p.~422]{ref:raoProbTheo} applied to $(\zeta'_n)_{n\in\Nz}$ at the stopping time $\tau\mn\tau'$ gives us
			\[
				\EE^{\pi^\star}_x\bigl[\zeta'_{\tau\mn\tau'}\bigr] = \EE^{\pi^\star}_x\bigl[\zeta'_0\bigr] = V'(x),
			\]
			where the last equality follows from the definition of $\zeta'_0$. From~\eqref{e:zetadef} we get
			\begin{align*}
				\EE^{\pi^\star}_x\bigl[\zeta'_{\tau\mn\tau'}\bigr] & = \EE^{\pi^\star}_x\Bigl[W_{\tau\mn\tau'-1}(\pi^\star, x) + \indic{K\setmin O}(x_{\tau\mn\tau'-1})\bigl(\indic{K}\cdot V'\bigr)(x_{\tau\mn\tau'})\Bigr]\\
				& = \EE^{\pi^\star}_x\Biggl[\sum_{t=0}^{\tau\mn\tau'-1} \indic{O}(x_t) + \indic{K\setmin O}(x_{\tau\mn\tau'-1})\bigl(\indic{K}\cdot V'\bigr)(x_{\tau\mn\tau'})\Biggr]\\
				& = \EE^{\pi^\star}_x\Bigl[\indic{K\setmin O}(x_{\tau\mn\tau'-1})\bigl(\indic{K}\cdot V'\bigr)(x_{\tau\mn\tau'})\Bigr].
			\end{align*}
			By definition of $\tau$ and $\tau'$, $\indic{K\setmin O}(x_{\tau\mn\tau'-1})$ equals $1$ on $\{\tau\mn\tau' < \infty\}$, and by our hypotheses the set $\{\tau\mn\tau' < \infty\}$ is a $\PP^{\pi^\star}_x$-full-measure set. Continuing from the last equality above we arrive at
			\begin{align}
				\EE^{\pi^\star}_x\bigl[\zeta'_{\tau\mn\tau'}\bigr] & = \EE^{\pi^\star}_x\Bigl[\indic{\{\tau\mn\tau' < \infty\}}\bigl(\indic{K}\cdot V'\bigr)(x_{\tau\mn\tau'})\Bigr]\nonumber\\
				& = \EE^{\pi^\star}_x\bigl[\indic{\{\tau\mn\tau' < \infty\}}\bigl(\indic{\{\tau < \tau'\}}\indic{K}(x_\tau)V'(x_{\tau}) + \indic{\{\tau > \tau'\}}\indic{K}(x_{\tau'})V'(x_{\tau'})\bigr)\bigr]\nonumber\\
				& = \EE^{\pi^\star}_x\bigl[\indic{\{\tau\mn\tau' < \infty\}}\indic{\{\tau < \tau'\}}\bigr]\label{e:bydef}\\
				& = \PP^{\pi^\star}_x\bigl(\tau < \tau', \tau < \infty\bigr),\nonumber
			\end{align}
			where the equality in~\eqref{e:bydef} follows from the assumptions on $V'$ and the definitions of $\tau$ and $\tau'$. Collecting the equations above we get $V'(x) = \PP^{\pi^\star}_x\bigl(\tau < \tau', \tau < \infty\bigr)$ as asserted.
		\end{proof}


		It is of interest to note that the hypotheses of Theorem~\ref{t:Vprimechar} requires at least one of the stopping times $\tau$ or $\tau'$ to be finite. Let us examine the case of $\tau\mn\tau'$ being $\infty$ on a set of positive probability. Following the proof of Theorem~\ref{t:Vprimechar}, we see that in this case we have to agree on the value of $V'(x_{\tau\mn\tau'})$ on $\{\tau\mn\tau' = \infty\}$. If $\lim_{n\to\infty} V'_n(\pi^\star, x)$ exists, then we can always let $V'(x_{\tau\mn\tau'})$ take this value on the set $\{\tau\mn\tau' = \infty\}$. However, the context of the problem offers another alternative, namely, to set $V'(x_{\tau\mn\tau'}) = 0$ on $\{\tau\mn\tau' = \infty\}$. This is because if $x_t\in K\setmin O$ for all $t\in\Nz$, then the value of $x_{\tau\mn\tau'}$ is of no consequence at all.

	\section{Conclusions and Future Work}
	\label{s:concl}
		The purpose of this article was to present a dynamic programming based solution to the problem of maximizing the probability of attaining a target set before hitting a cemetery set, and furnish an alternative martingale characterization of optimality in terms of thrifty and equalizing policies. Several related problems of interest were sketched in~\secref{s:disc:ss:gensetting}. Some of these problems do not admit an immediate solution in the dynamic programming framework we established here because of our central assumption that the cost-per-stage function is nonnegative. This issue deserves further investigation.

		The results in this article also provide clear indications to the possibility of developing verification tools for probabilistic computation tree logic~\citep{ref:kwiatkowskaSMC} in terms of dynamic programming operators. This matter is under investigation and will be reported in~\citep{ref:ramponiPCTL}. Implementation of the dynamic-programming algorithm in this article is challenging due to integration over subsets of the state-space, and suboptimal policies are needed. In this context development of a possible connection with `greedy-time-optimal' policies~\cite[Chapters~4, 7]{ref:meynCTCN}, originally proposed as a tractable alternative to optimal policies in demand-driven large-scale production systems, is being sought.

	\section*{Acknowledgement}
		The authors thank On\'esimo Hern\'andez-Lerma for helpful suggestions and pointers to references, and Sean Summers for posing the problem.

\bigskip

\def\cprime{$'$}

\bigskip
\bigskip

\end{document}